\documentclass[11pt,letterpaper]{article}

\usepackage[T1]{fontenc}
\usepackage{lmodern}
\usepackage[margin=1in]{geometry}
\usepackage{amsmath}
\usepackage[round]{natbib}
\usepackage{microtype}
\newcommand{\cF}{\mathcal{F}}
\newcommand{\cH}{\mathcal{H}}
\newcommand{\cI}{\mathcal{I}}

\newcommand{\cP}{\mathcal{P}}

\newcommand{\N}{\mathbb{N}}

\usepackage{algorithm}
\usepackage{algpseudocode}
\usepackage{appendix}
\usepackage{amssymb}
\usepackage{amsthm}
\usepackage{mathtools}
\usepackage{booktabs}
\usepackage{threeparttable}
\usepackage{bbm}
\usepackage{tabularx}
\usepackage{todonotes}
\usepackage{subcaption}
\usepackage{graphicx}
\usepackage{xcolor}
\usepackage{hyperref}
\usepackage{cleveref}

\newcommand{\E}{\mathbb{E}}

\newcommand{\R}{\mathbb{R}}

\newcommand{\ltwonorm}[1]{\left\| #1\right\|}
\newcommand{\set}[1]{\left\{#1\right\}}

\def\rvx{{\mathbf{x}}}
\def\rvy{{\mathbf{y}}}

\newcommand{\bracket}[1]{\left( #1 \right)}
\newcommand{\sqbk}[1]{\left[ #1 \right]}

\newtheorem{theorem}{Theorem}
\newtheorem{assumption}{Assumption}
\newtheorem{proposition}{Proposition}
\newtheorem{lemma}{Lemma}
\newtheorem{corollary}{Corollary}

\hypersetup{
  hidelinks,
  pdftitle={Large-Step Last-Iterate Convergence of Random-Reshuffling Stochastic Gradient Descent},
  pdfauthor={Zijun Chen and Zihan Zhang}
}

\title{Large-Step Last-Iterate Convergence of Random-Reshuffling Stochastic Gradient Descent}
\author{
  Zijun Chen\\
  \normalsize Department of Computer Science and Engineering\\
  \normalsize Hong Kong University of Science and Technology\\
  \normalsize\texttt{zchendg@connect.ust.hk}
  \and
  Zihan Zhang\thanks{Corresponding author}\\
  \normalsize Department of Computer Science and Engineering\\
  \normalsize Hong Kong University of Science and Technology\\
  \normalsize\texttt{zsubfunc@outlook.com}
}
\date{}

\begin{document}
\maketitle

\begin{abstract}
Random reshuffling stochastic gradient descent (RR-SGD) is widely used for finite-sum optimization in machine learning. For convex $L$-smooth components, recent work establishes convergence guarantees for an averaged iterate with component stepsizes as large as order $1/L$~\citep{liu2026random}. However, the final iterate is a natural output in practice, and existing last-iterate guarantees in this setting require stepsizes that scale inversely with the number of components $n$.  We establish an expected objective-gap bound for the last iterate of standard RR-SGD on unconstrained problems with a constant component stepsize satisfying $0<\eta\lesssim 1/(L(1 + \log(nK)))$, where $K$ is the number of epochs. 
By retaining the sampling bias at random comparison points, we remove the explicit $1/n$ factor from the stepsize restriction. Combined with existing small-step bounds, our result matches the corresponding averaged-iterate convergence rate up to logarithmic factors.
\end{abstract}

\section{Introduction}
\label{sec:introduction}

Finite-sum optimization is a basic problem in empirical risk minimization, where a learning objective averages losses over a fixed dataset. We consider
\begin{equation}
\min_{\rvx\in\mathbb R^d} f(\rvx),
\qquad f(x)=\frac{1}{n}\sum_{i=1}^n f_i(\rvx),
\label{eq:intro-problem}
\end{equation}
with convex, $L$-smooth component functions and an attained minimum. When $n$ is large, evaluating a single component gradient can be much cheaper than computing the full gradient. Stochastic gradient descent (SGD) exploits this structure by sampling a component independently at each update. Random reshuffling (RR) instead visits every component once per epoch, drawing a fresh independent uniform permutation for each pass through the data. This sampling rule is common in training implementations and has motivated extensive work on optimization without replacement~\citep{bottou2009curiously, shamir2016without}

After a prescribed number of epochs, the final parameter is a natural output of the algorithm. A convergence bound for an average of earlier iterates does not, however, directly describe this output. For RR, the distinction is complicated by the dependence between the current iterate and the components remaining in an epoch: their gradients are generally biased estimates of the full gradient. Understanding the final iterate therefore requires accounting for the sampling rule throughout the optimization trajectory.

Recent work has established last-iterate guarantees for smooth convex shuffling methods using component-gradient information at a minimizer \citep{liu2024last,cai2025last}. In their plain-RR specializations, the stepsize multiplying each component gradient is restricted to order $1/(nL)$, with an additional logarithmic factor. By contrast, \citet{liu2026random} allows component stepsizes of order $1/L$ under the same convex smoothness assumptions, but controls an averaged output. Its bound combines the usual SGD noise scale with a sharper term available from reshuffling.
The averaged-output result shows that the stepsize upper bound can depend only on the smoothness parameter $L$, independently of the number of components $n$. This naturally raises the question:
\emph{Under the same convexity and smoothness assumptions, can we remove the explicit $1/n$ factor from the stepsize restriction for last-iterate convergence?}

\subsection{Our Contribution}
\label{sec:our_contribution}
We answer this question in the affirmative, up to logarithmic factors. For a run of $K$ epochs, we control the last iterate endpoint $\rvx_{K+1}^{1}$\footnote{Theorem~\ref{thm:main} also covers every prescribed component-time iterate $\rvx_k^i$ in the stated horizon. We focus on the endpoint $\rvx_{K+1}^1$ in the introduction.}. For smooth convex optimization (i.e., each $f_i$ is convex and $L$-smooth), Theorem~\ref{thm:main} states that RR-SGD with a constant stepsize $\eta$ satisfies that
\begin{align}
&\mathbb E[f(\rvx_{K+1}^{1})-f(\rvx_{\star})]\lesssim
\frac{D^2}{\eta nK}
+\min\{\eta\sigma_\star^2,\eta^2nL\sigma_\star^2\}
. \nonumber
\end{align}
provided that the stepsize satisfies $\eta \leq c/(L(1 + \log (nK)))$ for a universal constant $c>0$. Here, $\lesssim$ only hides the universal constants and logarithmic factors in $nK$. $D = \|\rvx_1^1 - \rvx_{\star}\|$ ($\|\cdot\|$ is the $\ell_2$-norm) is the distance from the initial point to a minimizer $\rvx_{\star}$, and $\sigma_{*}^{2}$ is the gradient variance at the minimizer, which is invariant even when $f$ has multiple minimizers.

The first term, $D^2/(\eta nK)$, depends on the initial distance from the optimum and decreases with the number of updates for a fixed stepsize. The second term, $\min\{\eta\sigma_\star^2,\eta^2nL\sigma_\star^2\}$, accounts for the uncertainty induced by shuffling. The two expressions in this minimum describe the same stepsize regimes as the averaged-output bound of \citet{liu2026random}: $\eta^2nL\sigma_\star^2$ is smaller when $\eta nL\leq 1$, whereas $\eta\sigma_\star^2$ is smaller when $\eta nL\geq 1$. For $D>0$, specializing to the epoch endpoint after $T=nK$ updates and choosing a constant stepsize using the horizon and problem parameters recovers the polynomial parameter dependence of the corresponding tuned averaged guarantee, up to logarithmic factors (Corollary~\ref{cor:tuned}). When $\sigma_\star=0$, taking $\eta=c/[L(1+\log T)]$ yields an endpoint gap of $\widetilde O(LD^2/T)$.

\textbf{Technical contribution.}
The large-step analysis of \citet{liu2026random} controls the sampling bias (see definition at~\cref{equ:sampling_bias}) by comparing iterates with a fixed minimizer and summing over complete epochs.
Extending this approach to the last iterate requires analyzing suffixes that may begin within an epoch.
The starting iterate of each suffix serves as a random comparison point and may be correlated with the components remaining in that epoch.
This dependence introduces an additional bias that must be controlled jointly with the sampling bias along the trajectory.
To address this issue, our analysis combines two key ingredients:
\begin{itemize}
\item \textbf{Signed sampling bias cancellation.}
We track the sampling bias contributions both along the trajectory and at the random comparison point used in each suffix. The contribution at the comparison point equals the negative of the sampling bias at the suffix's starting position throughout the remainder of that epoch and vanishes in subsequent epochs. Retaining these signs through the suffix-to-last conversion reveals a partial cancellation between the two contributions. The resulting weights are nonnegative and satisfy a within-epoch summation bound, allowing us to apply a one-sided sampling-bias estimate and control repeated contributions from earlier updates without an additional factor of $n$.

\item \textbf{A gradient-moment recursion for last-iterate analysis.}
Building on the permutation-swap tools of \citet{liu2026random}, we derive a self-bounding estimate that relates the sampled-gradient second moment to the current objective gap, the gradient variance at a minimizer, and earlier gradient moments within the same epoch. The history term carries a coefficient of order $\eta^2L^2/n$, allowing its accumulated contribution under the suffix-to-last weights to be absorbed with only logarithmic overhead. Together with the signed sampling bias cancellation, this estimate yields a harmonic recursion in the objective gaps that preserves the inverse-time decay of the initial-error term and closes under $\eta L(1+\log(nK))\lesssim 1$, without an explicit inverse-$n$ restriction on the stepsize.
\end{itemize}

\section{Related Work}
\label{sec:related-work}
Table~\ref{tab:related_rr_rates} compares the closest smooth convex RR guarantees, including their admissible component stepsizes and output choices.

\begin{table*}[t]
\centering
\begin{threeparttable}

\caption{Comparison of expected objective-gap bounds for RR-SGD
after $K\geq 2$ epochs with a constant component stepsize $\eta$.
All results are stated for unconstrained finite-sum optimization
with convex, $L$-smooth components and an attained minimum.
Here $D=\|\rvx_1^1-\rvx_*\|$,
$\sigma_*^2=\frac{1}{n}\sum_{i=1}^n
\|\nabla f_i(\rvx_*)\|^2$,
$\ell_K=1+\log K$, and $\Lambda=1+\log(nK)$.
We use a common smoothness bound $L$; the original results may
retain finer smoothness parameters.
$O(\cdot)$ hides universal constants, while
$\widetilde{O}(\cdot)$ additionally hides logarithmic factors.
The average-iterate and last-iterate bounds control
$\E[f(\bar{\rvx})-f(\rvx_*)]$ and
$\E[f(\rvx_{K+1}^1)-f(\rvx_*)]$, respectively,
where $\bar{\rvx}$ denotes the averaged output used in
the corresponding reference.}
\label{tab:related_rr_rates}

\begin{tabular*}{\textwidth}{@{\extracolsep{\fill}}llll@{}}
\toprule
\textbf{Stepsize} & \textbf{Reference}
& \textbf{Rate} & \textbf{Output} \\
\midrule
$\eta\lesssim 1/(nL)$
& \citet{CaiLinDiakonikolas2024PrimalDual}
& $O\!\left(\frac{D^2}{\eta nK}+\eta^2nL\sigma_*^2\right)$
& Average iterate \\

$\eta\lesssim 1/(nL\sqrt{\ell_K})$
& \citet{liu2024last}
& $\widetilde{O}\!\left(
\frac{D^2}{\eta nK}+\eta^2nL\sigma_*^2\right)$
& Last iterate \\

$\eta\lesssim 1/(nL\sqrt{\ell_K})$
& \citet{cai2025last}
& $\widetilde{O}\!\left(
\frac{D^2}{\eta nK}+\eta^2nL\sigma_*^2\right)$
& Last iterate \\

$\eta\lesssim 1/L$
& \citet{liu2026random}
& $O\!\left(\frac{D^2}{\eta nK}
+\sigma_*^2\min\{\eta,\eta^2nL\}\right)$
& Average iterate \\
\midrule
$\eta\lesssim 1/(L\Lambda)$
& \textbf{This work (Theorem~\ref{thm:main})}\tnote{1}
& $\widetilde{O}\!\left(\frac{D^2}{\eta nK}
+\sigma_*^2\min\{\eta,\eta^2nL\}\right)$
& Last iterate \\
\bottomrule
\end{tabular*}

\begin{tablenotes}[flushleft]
\footnotesize
\item[1] Theorem~\ref{thm:main} additionally controls every
prescribed component-time iterate, with $nK$ replaced by
$\tau(k,i)$ in the initial-distance term.
\end{tablenotes}

\end{threeparttable}
\end{table*}

\paragraph{Convergence of shuffling methods.}
Early incremental analyses established convergence under cyclic and related sampling rules \citep{nedic2001incremental}, while \citet{recht2012toward} connected without-replacement iterations to matrix-product inequalities. Under strong convexity, \citet{gurbuzbalaban2021random} explained asymptotic advantages of RR, and \citet{haochen2019random,rajput2020closing} obtained increasingly sharp finite-epoch guarantees. Related analyses characterize cyclic incremental methods \citep{gurbuzbalaban2019convergence} and the steady-state behavior of constant-step reshuffling \citep{ying2018stochastic}. \citet{ahn2020sgd} weakened component assumptions under a Polyak--\L{}ojasiewicz condition and removed large-epoch restrictions in a strongly convex result using varying steps. %These results clarify the benefits of reshuffling under curvature assumptions stronger than convexity alone.

\paragraph{Smooth convex objectives.}
For general smooth convex components, \citet{nagaraj2019sgd} obtained an averaged-output guarantee with component steps as large as order $1/L$, assuming bounded gradients on a bounded domain. \citet{mishchenko2020random,nguyen2021unified} obtained averaged guarantees based on component gradients at a minimizer, with component steps of order $1/(nL)$. \citet{CaiLinDiakonikolas2024PrimalDual} refined the dependence on smoothness and data geometry through a primal-dual analysis, again for an averaged output. The large-step bound of \citet{liu2026random} combines the SGD and reshuffling noise regimes without a global variance assumption. 

%Our result recovers its polynomial dependencies at the final iterate, up to logarithms, in the common-smoothness, constant-step setting.

\paragraph{Last-iterate convergence.}
\citet{liu2024last} proved objective-gap guarantees for the last iterate of proximal shuffling methods, including plain RR when the regularizer is zero. \citet{cai2025last} established related endpoint guarantees for incremental gradient and proximal methods. Their smooth convex RR bounds use a stepsize ceiling of order $1/(nL\sqrt{\log K})$ for $K\geq2$, with an explicit $1/n$ factor. \citet{LiuZhou2025NonsmoothShuffling} studied outputs at arbitrary component times for convex, Lipschitz components, a different assumption from smoothness with potentially unbounded gradients. Classical last-iterate SGD analyses \citep{shamir2013last,harvey2019tight,jain2019last} supply useful techniques, but independent unbiased sampling avoids the within-epoch dependence present in RR.~\citet{attia2025fastlastiterateconvergencesgd} established
last-iterate guarantees for without-replacement SGD with
convex, $L$-smooth components under interpolation
($\sigma_*^2=0$). Their without-replacement analysis is
restricted to iterates within a single pass without replacement and does not cover nonzero noise at the optimum.
In contrast, we establish last-iterate guarantees for
multi-epoch random reshuffling beyond interpolation,
allowing $\sigma_*^2>0$ and capturing both the SGD-scale
and reshuffling-specific noise terms.
%These results establish that large component steps are compatible with convex smooth optimization, but they do not directly control the endpoint returned by standard RR.

\paragraph{Lower bounds and epoch dependence.}
\citet{safran2020shuffling,safran2021epochs} showed that the advantage of reshuffling depends on the function class, conditioning, and number of epochs. For convex objectives, \citet{cha2023lower} established a lower bound at the reshuffling scale for sufficiently many epochs, a restricted small-step range, and a global gradient-deviation parameter. That result does not settle optimality throughout the larger stepsize range considered here, or the necessity of our logarithmic factors. The small-epoch lower bounds of \citet{KimChaYun2025SmallEpochIGD} concern deterministic cyclic IGD and do not directly apply to fresh RR. Related comparisons between reshuffling and reuse of a single permutation also depend on the problem class \citep{yun2021single}.

\section{Preliminary}
\label{sec:preliminary}

\paragraph{Notation.} Let $\N$ denote the set of natural numbers. Given $n\in \N$, we write $[n]:=\{1, \ldots, n\}$, $\langle \cdot , \cdot \rangle$ is the standard Euclidean inner product, and $\|\cdot\|:=\sqrt{\langle \cdot , \cdot \rangle}$ is the $\ell_2$-norm. Given a real-valued differentiable function $h:\R^{d}\to \R$, $\nabla h(\rvx)$ denotes the gradient at $\rvx \in \R^{d}$.  A permutation $\pi$ of $[n]$ is a bijective mapping from $[n]$ to itself. Let $S_n$ denote the set of all permutations of $[n]$. We represent each permutation as $\pi=(\pi^1,\ldots,\pi^n)$, where $\pi^i$ denotes the index assigned to position $i$.

\paragraph{Objective.} We consider the finite-sum optimization problem
\[
\min_{\rvx \in \R^{d}}f(\rvx),\quad f(\rvx) = \frac{1}{n}\sum_{i=1}^{n}f_{i}(\rvx), \quad \rvx \in \R^{d}
\]
and $n\in \N$ and for each $f_i:\R^{d}\to \R$ is convex differentiable. Assume $f$ attains its minimum. Fix $\rvx_{*}\in \arg\min_{\rvx \in \R^{d}}f(\rvx)$ with a deterministic initialization $\rvx_{1}^{1}$.

Our analysis relies on the following key Assumptions

\begin{assumption}[Minimizer]
    \label{ass:minimizer} There exists a minimizer $\rvx_{*}\in \R^{d}$ such that $f(\rvx_{*})=\inf_{\rvx \in \R^{d}}f(\rvx)$.
\end{assumption}

\begin{assumption}[Convexity]
    \label{ass:convexity} For all $i\in [n]$, $f_{i}$ is convex.
\end{assumption}

\begin{assumption}[Smoothness]
    \label{ass:smoothness}
Each $f_{i}$ is $L_{i}$-smooth, i.e., $\forall \rvx, \rvy\in\R^{d}$,
\[
\ltwonorm{\nabla f_{i}(\rvx) - \nabla f_{i}(\rvy)} \leq L_{i} \ltwonorm{\rvx - \rvy}.
\]
Denote $L:= \max_{i\in [n]} L_i$.
We assume $L>0$.
\end{assumption}
All of the three assumptions are standard in the literature on stochastic optimization. We introduce two notations to simplify the expressions in the subsequent sections
\[
D:= \ltwonorm{\rvx_{1}^{1} - \rvx_{*}},\qquad \sigma_*^2 :=\frac{1}{n}\sum_{i=1}^{n}\ltwonorm{\nabla f_{i}(\rvx_{*})}^2
\]
We note that $D$ is the distance between the initial point and the optimal solution. $\sigma_{*}$ is commonly used to measure the noise level of the stochastic gradients at the optimum $\rvx_{*}$, which is invariant even when $f$ has multiple minimizers. In particular, $\sigma_{*}^{2}=0$ corresponds to the case in which all $f_{i}$ have a same optimal solution.

\section{RR-SGD and Main Result}
\label{sec:rr_sgd_and_main_result}
\begin{algorithm}[ht]
\caption{Random-Reshuffling Stochastic-Gradient-Descent (RR-SGD)}
\label{alg:rr_sgd}
\begin{algorithmic}[1]
{
\State {\textbf{input:} initial point $\rvx_{1}^{1}\in\R^{d}$, stepsize $\{\eta_k > 0\}$}
\For{$k=1,\ldots, K$}
\State {Independently draw a permutation $\pi_k\sim S_n$}
\For{$i=1, \ldots, n$}
\State {$\rvx_{k}^{i+1}=\rvx_{k}^{i} - \eta_k \nabla f_{\pi_{k}^{i}}(\rvx_{k}^{i})$}
\EndFor
\State {$\rvx_{k+1}^{1}=\rvx_{k}^{n+1}$}
\EndFor
}
\end{algorithmic}
\end{algorithm}

Algorithm~\ref{alg:rr_sgd} describes random reshuffling stochastic gradient descent (RR-SGD). At the start of epoch $k$, RR-SGD draws a fresh uniform permutation $\pi_k\in S_n$, independently of earlier epochs. It then visits the components in this order: at position $i$, it evaluates $\nabla f_{\pi_k^i}(\rvx_k^i)$ and takes a gradient step with stepsize $\eta_k$. Each component is therefore used exactly once per epoch. The endpoint of epoch $k$ becomes the starting point of the next epoch, $\rvx_{k+1}^1=\rvx_k^{n+1}$. We henceforth consider the constant stepsize $\eta_k\equiv\eta$ for all $k\in[K]$. The output of the algorithm is the final iterate $\rvx_{K+1}^{1}$ after $K$ epochs.

\subsection{Last-Iterate Convergence Guarantee}
We now state our last-iterate guarantee. The stepsize is chosen before the run and remains constant across all component updates. Theorem~\ref{thm:main} controls the iterate at any prescribed component time, including the endpoint $\rvx_{K+1}^{1}$ returned by Algorithm~\ref{alg:rr_sgd}. Write 
\[
\tau(k,i)=(k-1)n+i,\qquad i\in [n]
\]
so that $\rvx_k^i$ is the iterate after $\tau(k,i)-1$ updates.
\begin{theorem}
\label{thm:main}
    
    Let $K\geq1$ and $\Lambda=1+\log(nK)$. Under Assumptions~\ref{ass:minimizer},~\ref{ass:convexity}, and~\ref{ass:smoothness}, run RR-SGD (Algorithm~\ref{alg:rr_sgd}) with $\eta_k\equiv\eta$ for $k\in[K]$, where $0<\eta\leq 1/(1024L\Lambda)$. For every prescribed pair $(k,i)$ with $i\in[n]$ and $2\leq\tau(k,i)\leq nK+1$, the iterate $\rvx_k^i$ satisfies
    \begin{equation*}
    \begin{aligned}
        &\E\sqbk{f(\rvx_k^i)-f(\rvx_*)}\\
        &\quad \leq O\bracket{\frac{D^2}{\eta\tau(k,i)}\nonumber+\Lambda^{3/2}\min\set{\eta\sigma_*^2,\eta^2nL\sigma_*^2}} .\nonumber
    \end{aligned}    
    \end{equation*}

\end{theorem}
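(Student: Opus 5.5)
The plan is to run a suffix-averaging (Shamir--Zhang / Orabona-style) last-iterate argument at the level of component time, with one change: the sampling bias is kept with its sign rather than bounded in absolute value. Index updates by $t=\tau(k,i)$ and write $\rvx_t$, $g_t=\nabla f_{\pi_k^i}(\rvx_t)$. Let $T=\tau(k,i)-1$ be the target time.

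\textbf{Step 1: one-step inequality.} For any comparison point $\rvy$ that is measurable with respect to the history up to some time $s\le t$, I expand $\|\rvx_{t+1}-\rvy\|^2$ and use convexity and smoothness of $f_{\pi_t}$. This gives
\[
2\eta\,(f_{\pi_t}(\rvx_t)-f_{\pi_t}(\rvy))\le \|\rvx_t-\rvy\|^2-\|\rvx_{t+1}-\rvy\|^2+\eta^2\|g_t\|^2 .
\]
I then replace $f_{\pi_t}$ by $f$ and collect the discrepancy $f_{\pi_t}(\cdot)-f(\cdot)$, evaluated at $\rvx_t$ and at $\rvy$, as the sampling bias. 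For $\rvy=\rvx_*$ this is the bias of \citet{liu2026random}. For the random comparison point $\rvy=\rvx_s$ with $s$ inside the epoch, the term $\E[f(\rvx_s)-f_{\pi_t}(\rvx_s)]$ is not zero for the remaining positions of that epoch. Because $\rvx_s$ depends only on the first $s-1$ indices, it equals minus the within-epoch bias at position $s$, and it vanishes in later epochs, where $\pi_t$ is fresh.

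\textbf{Step 2: suffix-to-last conversion.} I sum the one-step inequality over the suffix $t\in[s,T]$ with $\rvy=\rvx_s$. Applying the standard discrete identity with weights $1/(T-s+1)$ turns the bound for the average of $\E[f(\rvx_t)]-f(\rvx_*)$ over the full run into a bound for $\E[f(\rvx_T)-f(\rvx_*)]$. The initial-distance term $D^2/(\eta T)$ comes from $s=1$ with $\rvy=\rvx_*$.

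I keep the signed bias contributions, from the trajectory and from the comparison point, until after the weights are combined. The goal is to show that the net coefficient on each per-position bias is nonnegative and sums to at most $O(\Lambda)$ within each epoch. Then the one-sided estimate of \citet{liu2026random} applies: the expected bias is at most $\eta$ times the gradient moments within the epoch, up to lower-order terms. Proving this nonnegativity and the within-epoch summation bound is the main obstacle. I would attempt it by writing the bias at position $j$ of epoch $k$ as a function of $j$ alone, and computing the coefficient explicitly as a difference of harmonic partial sums.

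\textbf{Step 3: gradient-moment recursion and closing.} Using permutation swaps, I would show
\[
\E\|g_t\|^2\le C L\,\E[f(\rvx_t)-f(\rvx_*)]+C\sigma_*^2\min\{1,\eta nL\}\cdot(\text{scale})+\frac{C\eta^2L^2}{n}\sum_{u<t,\ \text{same epoch}}\E\|g_u\|^2 ,
\]
where the $\min$ captures both regimes, $\eta\sigma_*^2$ and $\eta^2 nL\sigma_*^2$, once it is multiplied by $\eta$. Substituting this into the weighted inequality, the history terms carry $\eta^2L^2/n$. Their accumulation under harmonic weights is bounded by $\eta^2L^2\Lambda$, which is small.

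This leaves a harmonic recursion of the form $a_T\le \frac{D^2}{\eta T}+c\,\eta L\Lambda\,\max_{t\le T}a_t+\text{noise}$, where $a_T$ is the expected gap at time $T$. I would close it by induction on $T$ against the target bound $B_T$, using $\eta L\Lambda\le 1/1024$. This yields the stated bound, with $\Lambda^{3/2}$ arising from the harmonic sums together with a Cauchy--Schwarz step in the noise term.
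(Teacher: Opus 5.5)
\textbf{There is a genuine gap: your route proves only the SGD-scale bound, not the $\min\{\eta\sigma_*^2,\eta^2nL\sigma_*^2\}$ term.}

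Your Steps 1--2 and the history-term absorption in Step 3 are essentially the paper's large-step argument. That argument has the comparator bias at $\rvx_s$ equal to the position-$s$ bias within its epoch and zero afterwards, a signed cancellation, nonnegative net weights, and harmonic block sums. Carried out correctly, it gives only $\E[f(\rvx_k^i)-f(\rvx_*)]\lesssim D^2/(\eta\tau(k,i))+\Lambda\eta\sigma_*^2$.

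The moment bound you posit in Step 3, with $\sigma_*^2$ damped by $\min\{1,\eta nL\}$, is false. Since $\pi_t$ is marginally uniform, $\E\|\nabla f_{\pi_t}(\rvx_*)\|^2=\sigma_*^2$. So once the iterates are near $\rvx_*$, the sampled-gradient second moment is about $\sigma_*^2$ for every stepsize, while $L\,\E[f(\rvx_t)-f(\rvx_*)]$ is small. The correct self-bound carries an undamped $4\sigma_*^2$. After the one-sided bias estimate, your bound reads $\Delta_N\le \frac{D^2}{2\eta N}+O(\eta)\sum_{t\le N}\E\|g_t\|^2/\max\{N-t,1\}$, which is at least of order $\eta\sigma_*^2$. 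No per-update comparison argument of this kind can therefore reach $\Lambda^{3/2}\eta^2nL\sigma_*^2$ when $\eta nL\Lambda^{3/2}\ll1$. That scale comes from the epoch-level cancellation $\sum_{r}\nabla f_r(\rvx_*)=0$, which your argument never uses.

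The paper obtains the small-step regime separately, in three ranges:
\begin{itemize}
\item For $\eta\le\eta_0=1/(4nL\sqrt{2(1+\log K)})$, it imports the small-step endpoint bound of \citet{liu2024last} at epoch starts $\rvx_k^1$. It transfers this to interior component times via the within-epoch comparison $\Delta_k^i\le6\Delta_k^1+8\eta^2nL\sigma_*^2$, treating the first two epochs directly.
\item For $\eta_0<\eta<1/(nL)$, it uses $1/(\eta nL)<4\sqrt{2\Lambda}$ to rewrite $\Lambda\eta\sigma_*^2\lesssim\Lambda^{3/2}\eta^2nL\sigma_*^2$. This, not a Cauchy--Schwarz step, is the source of $\Lambda^{3/2}$.
\item For $\eta nL\ge1$, the large-step bound suffices as is.
\end{itemize}

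\textbf{Two smaller corrections to the large-step part.}

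First, the block-sum target in Step 2 must be relative, not an absolute $O(\Lambda)$ per epoch. The history term $\frac{3\eta}{n}\sum_{j<i}Q_k^j$ of the bias estimate is absorbed into the weighted moments only if $\frac1n\sum_{i>q}v_{N,p}^{i}\le w_{N,p}^{q}$. Here $w_{N,p}^q=1/\max\{N-\tau(p,q),1\}$, and $v_{N,k}^i=\frac{N-e_{N,k}^i}{(N-\tau(k,i))(N-\tau(k,i)+1)}$ is the net signed weight, with $e_{N,k}^i=\min\{nk,N\}$. An $O(\Lambda)$ bound alone leaves an unweighted $\frac{\eta\Lambda}{n}\sum_tQ_t$, which grows with $K$.

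Second, a recursion with $\max_{t\le T}a_t$ cannot close at rate $D^2/(\eta T)$, because the term $c\eta L\Lambda\, a_1$ does not decay in $T$. You must keep the weighted sum $\sum_{t<T}a_t/(T-t)$ and use $\sum_{t=1}^{T-1}\frac{1}{t(T-t)}=\frac{2\cH_{T-1}}{T}$ in the induction.
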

\paragraph{Admissible stepsize.} Theorem~\ref{thm:main} allows a constant component stepsize of order $1/(L\log(nK))$, without an explicit inverse dependence on the epoch length $n$. To the best of our knowledge, this is the first last-iterate guarantee for standard RR-SGD over any number of epochs that allows nonzero noise $\sigma_{*}^{2}>0$ and removes the inverse-$n$ factor from the component-stepsize restriction. The existing small-step last-iterate guarantees discussed above impose a component stepsize ceiling of order $1/(nL)$~\cite{liu2024last}. This distinction matters for the initial-error term {$D^2/(\eta\tau(k,i))$}: a larger admissible stepsize allows this term to decrease more rapidly when the noise is sufficiently small. {The guarantee applies at every prescribed component time; the epoch endpoint is the case $(k,i)=(K+1,1)$.}

\paragraph{Two noise regimes.} The minimum in the bound captures two complementary regimes. Since $\eta^2 nL\sigma_\star^2=(\eta nL)\eta\sigma_\star^2$, the reshuffling term $\eta^2 nL\sigma_\star^2$ is smaller when $\eta nL\leq 1$, whereas the SGD-scale term $\eta\sigma_\star^2$ is smaller when $\eta nL\geq 1$. Thus, the theorem retains the sharper reshuffling dependence for small stepsizes while extending last-iterate control to a substantially larger stepsize range. Our new analysis establishes the large-step estimate; the two-regime guarantee follows by combining it with an existing small-step endpoint bound and a within-epoch comparison (Lemma~\ref{lem:within_epoch_comparison}).

% To the best of our knowledge, in smooth convex optimization, Theorem~\ref{thm:main} shows the first last-iterate convergence rate for RR-SGD with large-stepsize, i.e., $\eta \lesssim 1/(L\log (nK)$ rather than carrying an explicit $1/n$ factor. Previously,~\citet{liu2026random} shows the averaged optimal rate admits $\tfrac{LD^2}{nK} +\min\{\frac{\sigma_*D}{\sqrt{nK}}, (\tfrac{L\sigma_{*}^{2} D^4}{nK^2})^{\tfrac{1}{3}}\}$, where~\citet{liu2024last} established the same convergence rate for the last-iterate only for small stepsize $\eta_k\lesssim 1/(nL)$.

\begin{corollary}
    \label{cor:tuned}
    {
    Under the assumptions of Theorem~\ref{thm:main}, let $D>0$ and $\Lambda=1+\log(nK)$. For $\sigma_*>0$, define
    \begin{align*}
        \eta_{\mathrm{lin}}&:=\frac{D}{\Lambda^{3/4}\sigma_*\sqrt{nK}},\\
        \eta_{\mathrm{quad}}&:=\left(\frac{D^2}{\Lambda^{3/2}n^2LK\sigma_*^2}\right)^{1/3},
    \end{align*}
    and choose the constant stepsize
    \[
        \eta=\min\left\{\frac{1}{1024L\Lambda},\max\{\eta_{\mathrm{lin}},\eta_{\mathrm{quad}}\}\right\}.
    \]
    When $\sigma_*=0$, set $\eta=1/(1024L\Lambda)$. In either case, RR-SGD satisfies
    }
    \begin{align}
        &\E\sqbk{f(\rvx_{K+1}^{1}) - f(\rvx_{*})}\nonumber \\
        &\quad\leq  \widetilde O\bracket{\frac{LD^2}{nK} +\min\set{\frac{\sigma_*D}{\sqrt{nK}}, \bracket{\frac{L\sigma_{*}^{2}D^4}{nK^2}}^{1/3}}}
    \end{align}
\end{corollary}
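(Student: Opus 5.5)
The corollary follows from Theorem~\ref{thm:main} at $(k,i)=(K+1,1)$, so $\tau=nK+1\ge nK$. Write $\eta_{\max}:=1/(1024L\Lambda)$ and $T:=nK$. The bound becomes
\[
B(\eta):=\frac{D^2}{\eta T}+\Lambda^{3/2}\min\{\eta\sigma_*^2,\eta^2nL\sigma_*^2\}.
\]
The task is to show that the prescribed $\eta$ makes $B(\eta)$ at most the claimed rate up to polylogarithmic factors in $\Lambda$.

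\textbf{Case $\sigma_*=0$.} Then $\eta=\eta_{\max}$, and $B=1024L\Lambda D^2/T=\widetilde O(LD^2/(nK))$.

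\textbf{Case $\sigma_*>0$.} Let $\eta_0:=\max\{\eta_{\mathrm{lin}},\eta_{\mathrm{quad}}\}$. The two candidates balance the two branches:
\begin{align*}
\frac{D^2}{\eta_{\mathrm{lin}}T}&=\Lambda^{3/2}\eta_{\mathrm{lin}}\sigma_*^2=\Lambda^{3/4}\frac{\sigma_*D}{\sqrt{T}},\\
\frac{D^2}{\eta_{\mathrm{quad}}T}&=\Lambda^{3/2}\eta_{\mathrm{quad}}^2nL\sigma_*^2=\Lambda^{1/2}\Big(\frac{L\sigma_*^2D^4}{nK^2}\Big)^{1/3}.
\end{align*}
Let $R:=\min\{\sigma_*D/\sqrt T,\,(L\sigma_*^2D^4/(nK^2))^{1/3}\}$.

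\textbf{Subcase $\eta_0\le\eta_{\max}$,} so $\eta=\eta_0$. The first term is $D^2/(\eta_0T)=\min\{D^2/(\eta_{\mathrm{lin}}T),D^2/(\eta_{\mathrm{quad}}T)\}\le\Lambda^{3/4}R$. For the noise term, if $\eta_0=\eta_{\mathrm{lin}}$, bound the minimum by its first branch, $\Lambda^{3/2}\eta_{\mathrm{lin}}\sigma_*^2$; if $\eta_0=\eta_{\mathrm{quad}}$, bound it by its second branch, $\Lambda^{3/2}\eta_{\mathrm{quad}}^2nL\sigma_*^2$. Either way the noise term equals $D^2/(\eta_0T)\le\Lambda^{3/4}R$. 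Hence $B\le2\Lambda^{3/4}R$.

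\textbf{Subcase $\eta_0>\eta_{\max}$,} so $\eta=\eta_{\max}$. The first term is $1024\Lambda LD^2/T$, which has the required $LD^2/(nK)$ form. For the noise term, $\eta<\eta_{\mathrm{lin}}$ and $\eta<\eta_{\mathrm{quad}}$ cannot both be used directly, because we only know that $\eta$ is below the larger of the two. The key monotonicity is the following. If $\eta\le\eta_{\mathrm{lin}}$, the first branch gives noise $\le\Lambda^{3/2}\eta_{\mathrm{lin}}\sigma_*^2=\Lambda^{3/4}\sigma_*D/\sqrt T$. If $\eta\le\eta_{\mathrm{quad}}$, the second branch gives noise $\le\Lambda^{1/2}(\cdot)^{1/3}$.

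So the noise is bounded by the branch corresponding to whichever candidate is at least $\eta$. This yields a bound by one of the two terms, not necessarily by their minimum. To get the minimum, check the ordering, which is the main obstacle. Comparing the balanced values: $\eta_{\mathrm{lin}}\ge\eta_{\mathrm{quad}}$ holds exactly when the linear term is the smaller one up to $\Lambda$ powers. Indeed $D^2/(\eta T)$ is decreasing in $\eta$, so the larger candidate gives the smaller balanced value, and $\eta_0$ selects the branch achieving $R$ up to $\Lambda^{3/4}$.

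Thus in the case $\eta=\eta_{\max}<\eta_0$, the noise is $\le$ (the balanced value at $\eta_0$) $\le\Lambda^{3/4}R$. Together, $B\le O(\Lambda^{3/2}(LD^2/(nK)+R))$, which is $\widetilde O$ of the claim. The delicate step is verifying this ordering identity between $\eta_{\mathrm{lin}}\gtrless\eta_{\mathrm{quad}}$ and which term attains $R$. It follows since both $\Lambda$-adjusted balanced values equal $D^2/(\eta_\bullet T)$.
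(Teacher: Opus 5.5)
Your proof is correct and follows essentially the same route as the paper. You apply Theorem~\ref{thm:main} at $(K+1,1)$, use the balancing identities to bound the noise branch matching the larger candidate $\max\{\eta_{\mathrm{lin}},\eta_{\mathrm{quad}}\}$ by $D^2/(nK\max\{\eta_{\mathrm{lin}},\eta_{\mathrm{quad}}\})$, and pay a $\Lambda LD^2/(nK)$ term for truncation. The paper merges your two subcases into one step, using monotonicity of the noise term in $\eta$ together with $1/\eta\le 1024L\Lambda+1/\max\{\eta_{\mathrm{lin}},\eta_{\mathrm{quad}}\}$. The ``ordering'' step you flag as delicate is immediate, since $D^2/(nK\max\{\eta_{\mathrm{lin}},\eta_{\mathrm{quad}}\})$ is by definition the smaller of the two balanced values.
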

The candidates $\eta_{\mathrm{lin}}$ and $\eta_{\mathrm{quad}}$ balance the initial-distance term against the two expressions in the noise minimum in Theorem~\ref{thm:main}, respectively. Taking their maximum gives the smaller of the two balanced bounds; truncating the stepsize at $1/(1024L\Lambda)$ adds at most a term of order $\Lambda LD^2/(nK)$. Substitution into Theorem~\ref{thm:main} therefore gives Corollary~\ref{cor:tuned}. Up to logarithmic factors, this recovers the polynomial parameter dependence of the corresponding averaged-output guarantee of \citet{liu2026random}. In particular, when $\sigma_\star=0$, choosing the largest admissible stepsize gives $\widetilde{O}(LD^2/T)$ after $T=nK$ component-gradient evaluations. The general guarantee also permits $\sigma_\star>0$  and requires no global bound on gradients or their variance.

%\section{Technique Overview} \label{sec:technique_overview}

\section{Theoretical Proof}
\label{sec:theoretical_proof}
We prove Theorem~\ref{thm:main} in four steps. First, we use permutation-swap stability to derive self-bounding estimates for the sampling bias and the sampled-gradient second moment in Section~\ref{subsec:technical_proof_part_1}. Second, we retain the bias at random comparison points throughout a suffix-to-last conversion, obtaining weights that control repeated within-epoch contributions in Section~\ref{subsec:technical_proof_part_2}. Third, we combine these estimates and close a harmonic-weight recursion to establish the large-step last-iterate guarantee in Section~\ref{subsec:technical_proof_part_3}. Finally, we combine this guarantee with an existing small-step bound {and a within-epoch comparison} to obtain the two noise regimes in Theorem~\ref{thm:main} in Section~\ref{subsec:putting_together}. 

We define the auxiliary quantities
\begin{align}
    \Delta_{k}^{i} :=& \E\sqbk{f(\rvx_{k}^{i})- f(\rvx_{*})}\nonumber\\
    \beta_{k}^{i} :=& \E\sqbk{f(\rvx_{k}^{i}) - f_{\pi_{k}^{i}}(\rvx_{k}^{i})}\label{equ:sampling_bias}
\end{align}

{Here, $\beta_{k}^{i}$ is the \emph{sampling bias}. The challenge is not just that $\beta_{k}^{i}$ may be nonzero: suffix comparisons also introduce bias at an earlier random iterate. For a suffix starting at $\rvx_{p}^{q}$, this comparator bias equals $-\beta_{p}^{q}$ in epoch $p$ and vanishes thereafter (Lemma~\ref{lem:exact_comparator_bias_identify}). We combine the two signed contributions before bounding them. Their partial cancellation yields nonnegative bias weights with controlled within-epoch sums, allowing the one-sided estimate in~\cref{equ:4.10} to be used in the last-iterate bound. Together with the sampled-gradient self-bound, this produces a recursion that closes under $\eta L(1 +\log (nK))\lesssim 1$ without requiring $\eta nL \lesssim 1$.}

% \textcolor{red}{Notably, the sampling bias $\beta_k^i$ in~\cref{equ:sampling_bias} accounts for the dependence between the sampled index $\pi_k^i$ and the current iterate $\rvx_k^i$. If these variables were independent, the uniform distribution of $\pi_k^i$ on $[n]$ would give $\beta_k^i=0$. Under RR-SGD, however, the previously sampled indices influence $\rvx_k^i$ and restrict the components available at position $i$, so this independence generally fails and $\beta_k^i$ need not vanish. The key reduction in our last-iterate analysis is to express all function-value bias terms in the suffix-to-last argument, including those at random comparison points, through this same quantity.}

% \textcolor{red}{Our key improvement is to retain the signed bias at random comparison points throughout the suffix-to-last conversion. The same $\beta_k^i$ enters the trajectory-bias and comparison-point-bias terms with coefficients of opposite signs. Combining these contributions before bounding them reveals a partial cancellation. The resulting bias weights are nonnegative and have controlled sums within each epoch, allowing us to absorb the residual bias into weighted gradient second moments without an extra factor of $n$. Together with the gradient-moment self-bound, this closes a harmonic recursion under $\eta L\log(nK)\lesssim1$.}

\subsection{Permutation Stability and Self-Bounding Estimates}
\label{subsec:technical_proof_part_1}
The current iterate in RR-SGD depends on the components already visited in its epoch. Consequently, sampled function values and gradient moments cannot be directly replaced by their full averages. Following the arguments from \cite{liu2026random}, we control this dependence by comparing the original trajectory with a trajectory generated by swapping two positions of the permutation.

Given a permutation $\pi \in S_n$ and two indices $i,j\in [n]$ satisfying the $j\leq i$, Let $\pi(i,j):=\pi\circ (ij)$ denote the permutation obtained by swapping the $i$-th and $j$-th entries of $\pi$.

In words, $\pi(i,j)$ is the permutation generated by exchanging the elements. Equipped with the above notation, we can define the virtual iterate $\rvx_{k}^{i}(i, j)$ as the $i$-th iterate of the $k$-th epoch with the permutation $\pi_{k}(i, j)$, i.e., for all $l\in [i]$
\begin{equation}
    \rvx_{k}^{l+1}(i, j) = \rvx_{k}^{l}(i, j) - \eta_k \nabla f_{\pi_{k}^{l}(i, j)}(\rvx_{k}^{l}(i, j))\label{equ:permutated_update}
\end{equation}
where $\rvx_{k}^{1}(i, j):=\rvx_{k}^{1}$.

The two trajectories coincide before the first swapped position. After that position and before the second swapped update, they apply the same sequence of component-gradient maps. Nonexpansiveness of these maps therefore controls the discrepancy between the two trajectories.

{
Following the above observation and based on Lemma 3 in~\cite{liu2026random}, we have the lemmas below to bound the drift term.} 
\begin{lemma}
    \label{lem:permutation_stability}
    Under {Assumptions~\ref{ass:convexity} and~\ref{ass:smoothness}}, suppose $\eta\leq 2/L$, $\forall k\in [K]$, then for any $k\in [K]$, $i\in [n]$, and $j\in [i-1]$, RR-SGD (Algorithm~\ref{alg:rr_sgd}) satisfies
    \begin{equation}
        \ltwonorm{\rvx_{k}^{i}(i, j) - \rvx_{k}^{i}} \leq \eta \ltwonorm{\nabla f_{\pi_{k}^{i}}(\rvx_{k}^{j}) - \nabla f_{\pi_{k}^{j}}(\rvx_{k}^{j})}\label{equ:4.3}
    \end{equation}
    and
    \begin{equation}
        \E\sqbk{\ltwonorm{\rvx_{k}^{i}(i, j) - \rvx_{k}^{i}}^2} \leq 4\eta^2 \E\sqbk{\ltwonorm{\nabla f_{\pi_{k}^{j}}(\rvx_{k}^{j})}^2}\label{equ:4.4}
    \end{equation}
\end{lemma}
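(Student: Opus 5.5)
The plan is to compare the two trajectories step by step, using nonexpansiveness of the component gradient maps, and then take expectations using exchangeability of positions in a uniform permutation.

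\textbf{Deterministic bound.} Write $T_l(\rvx):=\rvx-\eta\nabla f_{l}(\rvx)$. Since each $f_l$ is convex and $L$-smooth, co-coercivity gives
\[
\langle \nabla f_l(\rvx)-\nabla f_l(\rvy),\rvx-\rvy\rangle\geq \tfrac1L\|\nabla f_l(\rvx)-\nabla f_l(\rvy)\|^2 .
\]
Expanding $\|T_l(\rvx)-T_l(\rvy)\|^2$ then shows that $T_l$ is nonexpansive whenever $\eta\leq 2/L$. The two trajectories are compared in three stages.
\begin{itemize}
\item For positions $l<j$ the permutations agree, so $\rvx_k^{l}(i,j)=\rvx_k^{l}$ for $l\leq j$.
\item At position $j$, the swapped trajectory applies $\nabla f_{\pi_k^i}$ and the original applies $\nabla f_{\pi_k^j}$, both at the same point $\rvx_k^j$. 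Hence
\[
\|\rvx_k^{j+1}(i,j)-\rvx_k^{j+1}\|=\eta\|\nabla f_{\pi_k^i}(\rvx_k^j)-\nabla f_{\pi_k^j}(\rvx_k^j)\|.
\]
\item For positions $j<l<i$ both trajectories apply the same map $T_{\pi_k^l}$, so by nonexpansiveness the discrepancy does not grow.
\end{itemize}
The $i$-th iterate is produced by the update at position $i-1$, and this update is common to both trajectories (the swapped update at position $i$ only affects iterate $i+1$). Therefore $\|\rvx_k^i(i,j)-\rvx_k^i\|$ is bounded by the discrepancy created at step $j$, which is \eqref{equ:4.3}.

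\textbf{Second moment.} Squaring \eqref{equ:4.3} and applying $\|a-b\|^2\leq 2\|a\|^2+2\|b\|^2$ gives
\[
\E\|\rvx_k^i(i,j)-\rvx_k^i\|^2\leq 2\eta^2\E\|\nabla f_{\pi_k^i}(\rvx_k^j)\|^2+2\eta^2\E\|\nabla f_{\pi_k^j}(\rvx_k^j)\|^2 .
\]
The key step is to show $\E\|\nabla f_{\pi_k^i}(\rvx_k^j)\|^2=\E\|\nabla f_{\pi_k^j}(\rvx_k^j)\|^2$. Condition on $\mathcal F$, the information up to the start of epoch $k$ together with $\pi_k^1,\dots,\pi_k^{j-1}$. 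Then $\rvx_k^j$ is $\mathcal F$-measurable. Given $\mathcal F$, the entries $\pi_k^j$ and $\pi_k^i$ (with $i>j$) are both uniformly distributed on the remaining index set $[n]\setminus\{\pi_k^1,\dots,\pi_k^{j-1}\}$. Hence both conditional expectations equal the average of $\|\nabla f_l(\rvx_k^j)\|^2$ over the remaining indices $l$, and taking total expectation gives $4\eta^2\E\|\nabla f_{\pi_k^j}(\rvx_k^j)\|^2$, which is \eqref{equ:4.4}.

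\textbf{Main obstacle.} The only delicate point is the indexing: one must check that the swapped update at position $i$ does not enter $\rvx_k^i$. A second point is that the exchangeability argument must condition on the prefix up to $j-1$ and not up to $i-1$, since $\pi_k^i$ is not uniform given $\pi_k^{j+1},\dots,\pi_k^{i-1}$. Once the conditioning is chosen this way, both steps are routine.
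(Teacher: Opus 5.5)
Your proposal is correct and follows the paper's proof essentially step for step: the same trajectory comparison via nonexpansiveness of $\rvx\mapsto\rvx-\eta\nabla f_l(\rvx)$ gives \eqref{equ:4.3}, and the same conditioning on $\cF_k^j$ (the prefix through $\pi_k^{j-1}$) gives $\E\|\nabla f_{\pi_k^i}(\rvx_k^j)\|^2=Q_k^j$ before applying $\|a-b\|^2\leq 2\|a\|^2+2\|b\|^2$. One small wording fix: when $i=j+1$, the update producing $\rvx_k^i$ is the swapped one rather than a common one, but then your second stage already gives \eqref{equ:4.3} with equality, as the paper notes.
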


{\Cref{equ:4.3} is the swap-stability bound of Lemma 3 in~\citet{liu2026random}, and~\cref{equ:4.4} gives its second-moment consequence. Crucially, the perturbation is controlled by the earlier update $j$, without an additional factor dependence on the separation $i-j$. We retain this per-update structure rather than collapse it into an epoch-wide bound, allowing the suffix weights to control the history terms later.}

% \textcolor{red}{Equation~\cref{equ:4.3} is the stability estimate in Lemma~3 of \citet{liu2026random}. Equation~\cref{equ:4.4} makes its second-moment consequence explicit. Here we apply this drift control to both the sampling bias $\beta_k^i$ and the sampled-gradient moment $Q_k^i$, obtaining the coupled self-bounding estimates below while retaining the contribution of earlier gradient moments within the epoch. Our last-iterate analysis combines these estimates with the signed bias at random comparison points, so that the suffix weights control repeated contributions without summing over complete epochs.}
\begin{lemma}[Self-bounding estimates]
    \label{lem:self_boundness}
    When $0\leq \eta \leq 1/(2L)$, for any $k\in [K]$ and $i \in [n]$
    \begin{align}
        &\E\sqbk{f(\rvx_{k}^{i}) - f_{\pi_{k}^{i}}(\rvx_{k}^{i})}\leq \frac{\eta}{2} \E\sqbk{\ltwonorm{\nabla f_{\pi_{k}^{i}}(\rvx_{k}^{i})}^2}\nonumber\\
        &\qquad + \frac{3\eta}{n}\sum_{j < i} \E\sqbk{\ltwonorm{\nabla f_{\pi_{k}^{j}}(\rvx_{k}^{j})}^2}.\label{equ:4.10}
    \end{align}
    When $\eta \leq 2/L$, then for any $k\in [K]$ and $i\in[n]$
    \begin{align}
        &\E\sqbk{\ltwonorm{\nabla f_{\pi_{k}^{i}}(\rvx_{k}^{i})}^2} \leq 8L \E\sqbk{f(\rvx_{k}^{i}) - f(\rvx_{*})}\nonumber\\
        &\qquad + 4\sigma_{*}^2 + \frac{8\eta^2 L^2}{n}\sum_{j < i}\E\sqbk{\ltwonorm{\nabla f_{\pi_{k}^{j}}(\rvx_{k}^{j})}^2}
        \label{equ:4.11}
    \end{align}
\end{lemma}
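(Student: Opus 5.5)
Both estimates rest on one exchangeability device. Conditional on $\rvx_k^1$ (equivalently, on $\mathcal F_{k-1}$), the permutation $\pi_k$ is uniform. Hence for any fixed $j\le i$, the pair $(\rvx_k^i(i,j),\pi_k^j)$ has the same law as $(\rvx_k^i,\pi_k^i)$. The reason is that $\pi_k(i,j)$ is again uniform, and in it the entry at position $j$ is $\pi_k^i$. Consequently, for any function $g$,
\[
\E\sqbk{g(\rvx_k^i,\pi_k^j)}=\E\sqbk{g(\rvx_k^i(i,j),\pi_k^i)}.
\]
For $j\ge i$ the iterate $\rvx_k^i$ does not depend on $\pi_k^j$. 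Averaging over positions then expresses the full average $f(\rvx_k^i)=\frac1n\sum_{l}f_{\pi_k^l}(\rvx_k^i)$ as follows: positions $l\ge i$ contribute exactly the sampled value at $\rvx_k^i$ (by symmetry of the unvisited positions), and positions $j<i$ contribute the sampled value $f_{\pi_k^i}$ evaluated at the swapped iterate $\rvx_k^i(i,j)$. This yields the identity
\begin{equation*}
\beta_k^i=\frac1n\sum_{j<i}\E\sqbk{f_{\pi_k^i}(\rvx_k^i(i,j))-f_{\pi_k^i}(\rvx_k^i)},
\end{equation*}
and the analogous identity for $\E\ltwonorm{\nabla f_{\pi_k^i}(\rvx_k^i)}^2$ with $\frac1n\sum_l\ltwonorm{\nabla f_l(\rvx_k^i)}^2$ in place of $f$. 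I would establish this identity first.

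\textbf{Proof of \cref{equ:4.10}.} Apply $L$-smoothness of $f_{\pi_k^i}$ to each summand:
\[
f_{\pi_k^i}(\rvy)-f_{\pi_k^i}(\rvx)\le\langle\nabla f_{\pi_k^i}(\rvx),\rvy-\rvx\rangle+\tfrac L2\ltwonorm{\rvy-\rvx}^2 .
\]
Young's inequality bounds the inner product by
\[
\tfrac{\eta}{2}\cdot\tfrac{n}{i-1}\ltwonorm{\nabla f_{\pi_k^i}(\rvx_k^i)}^2+\tfrac{i-1}{2\eta n}\ltwonorm{\rvy-\rvx}^2 .
\]
With weights of this form, the first piece summed over $j<i$ and multiplied by $1/n$ gives exactly $\frac\eta2\E\ltwonorm{\nabla f_{\pi_k^i}(\rvx_k^i)}^2$. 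The remaining pieces are at most $\big(\frac{1}{2\eta}+\frac{L}{2n}\big)\ltwonorm{\rvx_k^i(i,j)-\rvx_k^i}^2$ per $j$. This is problematic, because the factor $(i-1)/n$ becomes $1/\eta$ without a $1/n$. Instead I would use the per-$j$ weight $1/\eta$, i.e.\ Young's inequality with parameter $\eta$ uniformly, which gives a gradient term $\frac{\eta}{2}\cdot\frac{i-1}{n}\le\frac\eta2$. The drift term is then bounded through \cref{equ:4.4} by $\big(\frac1{2\eta}+\frac L2\big)4\eta^2\E\ltwonorm{\nabla f_{\pi_k^j}(\rvx_k^j)}^2\le 3\eta\,\E\ltwonorm{\nabla f_{\pi_k^j}(\rvx_k^j)}^2$ when $\eta\le1/(2L)$. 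After the prefactor $1/n$, this gives \cref{equ:4.10}.

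\textbf{Proof of \cref{equ:4.11}.} Use the standard bound
\[
\ltwonorm{\nabla f_l(\rvx)}^2\le2\ltwonorm{\nabla f_l(\rvx)-\nabla f_l(\rvx_*)}^2+2\ltwonorm{\nabla f_l(\rvx_*)}^2 ,
\]
together with co-coercivity, which averaged over $l$ gives $\frac1n\sum_l\ltwonorm{\nabla f_l(\rvx)-\nabla f_l(\rvx_*)}^2\le2L(f(\rvx)-f(\rvx_*))$. The full average is therefore at most $4L(f-f_*)+2\sigma_*^2$. For the sampled moment, apply the gradient version of the identity and split each $j<i$ summand as $\ltwonorm{a}^2\le2\ltwonorm{b}^2+2\ltwonorm{a-b}^2$, with $a$ the gradient at the swapped iterate and $b$ the gradient at the true one. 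By $L$-Lipschitzness of $\nabla f_{\pi_k^i}$, the difference term is at most $L^2\ltwonorm{\rvx_k^i(i,j)-\rvx_k^i}^2\le 4\eta^2L^2\E\ltwonorm{\nabla f_{\pi_k^j}(\rvx_k^j)}^2$, using \cref{equ:4.4}. This yields
\[
\E\ltwonorm{\nabla f_{\pi_k^i}(\rvx_k^i)}^2\le 2\cdot\text{(full average)}+\frac{8\eta^2L^2}{n}\sum_{j<i}\E\ltwonorm{\nabla f_{\pi_k^j}(\rvx_k^j)}^2 ,
\]
which combined with the previous bound gives \cref{equ:4.11}.

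\textbf{Main obstacle.} The factor $2$ in the split is applied to the whole expectation. Done naively, the term $\ltwonorm{b}^2$ would double-count the sampled moment that appears on the left-hand side, so the $\ltwonorm{b}^2$ piece must be handled differently. I would apply the split in reverse: write $\ltwonorm{b}^2$ for the full-average representation in terms of swapped iterates, and then bound the sampled quantity against the average directly. The delicate point, and the one to check first, is therefore the correct law-equivalence step for the swapped pair, so that the constants $3$ and $8$ come out without an $i/n$ loss.
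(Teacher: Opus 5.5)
Your argument for \cref{equ:4.10} matches the paper's. You use the swap identity of Lemma~\ref{lem:permutation_inequality}, which you re-derive correctly, then smoothness plus Young's inequality with parameter $\eta$ for every $j$, then \cref{equ:4.4}, giving the coefficient $2\eta(1+\eta L)\le 3\eta$. Your bound on the full average and your drift estimate for \cref{equ:4.11} are also fine.

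The gap is in the main step of \cref{equ:4.11}. Set $A_j:=\E\|\nabla f_{\pi_k^i}(\rvx_k^i(i,j))\|^2$ and $\delta_j:=\rvx_k^i(i,j)-\rvx_k^i$. The gradient identity then reads $nM_k^i=\sum_{j<i}A_j+(n-i+1)Q_k^i$. Your split puts $a$ at the swapped iterate and $b$ at the true one, so it gives $A_j\le 2Q_k^i+2L^2\E\|\delta_j\|^2$. Substituted into the identity, this only bounds $M_k^i$ above in terms of $Q_k^i$, which is the wrong direction; the sentence ``This yields \ldots'' does not follow.

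Your last paragraph notices this but does not supply the fix, and reversing the split alone is not enough. Averaging $Q_k^i\le 2A_j+2L^2\E\|\delta_j\|^2$ over $j<i$ and using $\sum_{j<i}A_j\le nM_k^i$ gives $Q_k^i\le\frac{2n}{i-1}M_k^i+\cdots$. That is exactly the $n/(i-1)$ loss you are worried about.

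The missing idea is to let the unvisited positions absorb the factor $2$. Write $nQ_k^i=(i-1)Q_k^i+(n-i+1)Q_k^i$. Bound only the first part by the reversed split summed over $j<i$, and substitute the identity for $\sum_{j<i}A_j$:
\[
nQ_k^i\le 2\bigl(nM_k^i-(n-i+1)Q_k^i\bigr)+(n-i+1)Q_k^i+2L^2\sum_{j<i}\E\ltwonorm{\delta_j}^2
=2nM_k^i-(n-i+1)Q_k^i+2L^2\sum_{j<i}\E\ltwonorm{\delta_j}^2 .
\]
Drop the nonpositive term, then apply \cref{equ:4.4} and \cref{equ:4.2} to obtain \cref{equ:4.11}. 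So the delicate point is this bookkeeping with the $(n-i+1)Q_k^i$ term. It is not the law-equivalence step, which you already have right.
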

{\Cref{equ:4.10} adapts the sampling bias estimate of Lemma 5 in~\citet{liu2026random}. The additional ingredient is~\cref{equ:4.11}: applying the permutation identity to squared gradient norms relates $Q_{k}^{i}$ to the full component average, while retaining the history term $(8\eta^2L^2/n)\sum_{j<i}Q_{k}^{j}$. The full average is controlled by $\Delta_{k}^{i}$ and $\sigma_{*}^{2}$, without a global gradient bound. Crucially, the $\eta^2 L^2/n$ scaling makes the history term compatible with last-iterate weighting: combining Lemma~\ref{lem:harmonic_weight_block_sum} bounds its total contribution by $16\eta^2 L^2 \cH_{N}W_{N}$ as in~\cref{equ:4.26}. This term is absorbable under the theorem's stepsize condition, closing the moment estimate without an inverse-$n$ stepsize restriction.}

% \textcolor{red}{Equation~\cref{equ:4.10} is a variant of the sampling-bias bound in Lemma~5 of \citet{liu2026random}. It bounds the sampling bias in terms of sampled-gradient second moments, leaving these moments to be controlled.~\cref{equ:4.11} supplies the additional estimate needed for our last-iterate analysis: it bounds the sampled-gradient moment $\|\nabla f_{\pi_{k}^{i}}(\rvx_{k}^{i})\|^2$ by the current objective gap $f(\rvx_{k}^{i}) - f(\rvx_{*})$, the noise at the minimizer $\sigma_{*}^2$, and earlier gradient moments within the epoch.}

% \textcolor{red}{The key observation is that the permutation identity also applies to squared gradient norms. Together with Lemma~\ref{lem:permutation_stability}, this lets us compare the sampled second moment with the full component average despite the dependence between $\pi_k^i$ and $\rvx_k^i$, while retaining a correction involving earlier $\|\nabla f_{\pi_{k}^{i}}(\rvx_{k}^{j})\|^2$ with $j<i$. The coefficient $8\eta^2L^2/n$ of this history term is essential: under the harmonic weights used in Lemma~\ref{lem:4.5}, its accumulated contribution can be absorbed without an extra factor of $n$. Combined with the comparison-point bias cancellation developed next, this control yields a closed recursion in the objective gaps and permits the large stepsizes in Theorem~\ref{thm:main}.}

\subsection{Suffix-to-Last Conversion}
\label{subsec:technical_proof_part_2}
%A comparison with the fixed minimizer is sufficient for an averaged-output estimate, but a suffix-to-last argument also requires comparisons with earlier random iterates. Under RR-SGD, such an iterate depends on the components already revealed in its epoch, so the remaining components are not unbiased at that comparison point. We retain this comparator bias explicitly throughout the conversion.~RR-SGD contains the outer loop indexed by $k$ and inner loop indexed by $i\in [n]$. 
At epoch index $k$ and inner iteration index $i$, the corresponding global iteration index is $\tau(k, i)$.
For a fixed $N\geq 2$, let $(k_N,i_N)$ be the unique pair satisfying
\[
N=\tau(k_N,i_N), \quad 1\leq i_N\leq n.
\]
We write $\Delta_N:=\Delta_{k_N}^{i_N}$ for the objective gap at global position $N$.
Define the index set
\[
\cI_N(p,q)
:=
\set{(k,i): 
\tau(p,q)\leq \tau(k,i)\leq N},
\]
which contains all iteration pairs from $(p,q)$ through global iteration $N$, inclusive. Let $\cI_N:=\cI_N(1,1)$ denote the set of iteration pairs corresponding to the first $N$ steps.
For each $(k, i)\in \cI_{N}$, let
\[
e_{N, k}^{i} := \min\set{nk, N}
\]
denote the global index of the last position of epoch $k$ included in the horizon $N$.
To record the preceding positions within epoch $k$, define
\[
\cP_k^i:=\set{(k,j):1\leq j<i},
\qquad k\geq1,\quad i\in[n].
\]
In particular, $\cP_k^1=\varnothing$.

Fix a suffix starting at $(p,q)$ and use $\rvx_p^q$ as its comparison point. Within epoch $p$, all unrevealed positions have the same conditional marginal distribution. In later epochs, the fresh permutation is independent of $\rvx_p^q$. These observations give an exact expression for the comparator bias and the resulting suffix inequalities.
\begin{lemma}
\label{lem:exact_comparator_bias_identify}
    For every $(p, q)\in \cI_{N}$, and every $(k, i)\in \cI_{N}(p, q)$, we have
    \begin{align}
        &\E\sqbk{f_{\pi_{k}^{i}}(\rvx_{p}^{q}) - f(\rvx_{p}^{q})} = -\beta_{p}^{q}\mathbf{1}_{\set{k=p}};\label{equ:4.16}\\
        &\sum_{(k,i)\in\cI_{N}(p, q)}\!\!\!\!(\Delta_{k}^{i} - \Delta_{p}^{q})\leq -(e_{N, p}^{q} - \tau(p, q) + 1)\beta_{p}^{q} \nonumber\\
        &+\frac{\eta}{2}\!\!\!\!\sum_{(k, i)\in \cI_{N}(p, q)}\!\!\!\E\sqbk{\ltwonorm{\nabla f_{\pi_{k}^{i}}(\rvx_{k}^{i})}^2}+\!\!\!\!\sum_{(k, i)\in \cI_{N}(p, q)}\!\!\!\!\beta_{k}^{i} ;\label{equ:4.17}\\
        & \sum_{(k,i)\in \cI_{N}}\Delta_{k}^{i}\leq \frac{D^2}{2\eta} + \sum_{(k,i)\in\cI_{N}}\beta_{k}^{i}\nonumber\\
        &\qquad\qquad + \frac{\eta}{2}\sum_{(k,i)\in\cI_{N}}\E\sqbk{\ltwonorm{\nabla f_{\pi_{k}^{i}}(\rvx_{k}^{i})}^2} \label{equ:4.18}
    \end{align}
\end{lemma}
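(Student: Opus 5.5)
We prove the three claims in order. The identity \cref{equ:4.16} is the key ingredient, and the two inequalities then follow from the standard one-step distance expansion summed over a suffix.

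\emph{Proof of \cref{equ:4.16}.} Let $\cF_p^q$ be the sigma-field generated by $\pi_1,\ldots,\pi_{p-1}$ and $\pi_p^1,\ldots,\pi_p^{q-1}$; the point $\rvx_p^q$ is $\cF_p^q$-measurable.

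For $k>p$, the permutation $\pi_k$ is independent of $\cF_p^q$ and uniform. Hence $\E[f_{\pi_k^i}(\rvx_p^q)\mid\cF_p^q]=f(\rvx_p^q)$, and the left side vanishes.

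For $k=p$ and $i\ge q$, conditionally on $\cF_p^q$ the entries $\pi_p^i$ for $i\ge q$ are exchangeable: each is uniform on the $n-q+1$ unrevealed indices. So $\E[f_{\pi_p^i}(\rvx_p^q)\mid\cF_p^q]=\E[f_{\pi_p^q}(\rvx_p^q)\mid\cF_p^q]$. Taking expectations, the left side equals $\E[f_{\pi_p^q}(\rvx_p^q)-f(\rvx_p^q)]=-\beta_p^q$.

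\emph{One-step inequality.} Fix a comparison point $\rvy$, which may be random. Expand
\[
\|\rvx_k^{i+1}-\rvy\|^2=\|\rvx_k^i-\rvy\|^2-2\eta\langle\nabla f_{\pi_k^i}(\rvx_k^i),\rvx_k^i-\rvy\rangle+\eta^2\|\nabla f_{\pi_k^i}(\rvx_k^i)\|^2 .
\]
By convexity of $f_{\pi_k^i}$, the inner product is at least $f_{\pi_k^i}(\rvx_k^i)-f_{\pi_k^i}(\rvy)$. Rearranging and writing $f_{\pi_k^i}(\rvx_k^i)=f(\rvx_k^i)-[f(\rvx_k^i)-f_{\pi_k^i}(\rvx_k^i)]$ gives
\[
f(\rvx_k^i)-f_{\pi_k^i}(\rvy)\le \frac{\|\rvx_k^i-\rvy\|^2-\|\rvx_k^{i+1}-\rvy\|^2}{2\eta}+\frac{\eta}{2}\|\nabla f_{\pi_k^i}(\rvx_k^i)\|^2+\big(f(\rvx_k^i)-f_{\pi_k^i}(\rvx_k^i)\big).
\]
We then add and subtract $f(\rvy)$ on the left and take expectations.

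\emph{Proof of \cref{equ:4.17}.} Take $\rvy=\rvx_p^q$ and sum over $(k,i)\in\cI_N(p,q)$. The distance terms telescope to $\frac{1}{2\eta}\big(\|\rvx_p^q-\rvx_p^q\|^2-\E\|\rvx_{\text{end}}-\rvx_p^q\|^2\big)\le 0$. The last term on the right sums to $\sum\beta_k^i$. On the left, the contributions $\E[f(\rvy)-f_{\pi_k^i}(\rvy)]$ are handled by \cref{equ:4.16}: each equals $\beta_p^q\mathbf 1_{\{k=p\}}$. Moving them to the right produces $-\beta_p^q$ times the number of pairs in $\cI_N(p,q)$ with $k=p$, which is exactly $e_{N,p}^q-\tau(p,q)+1$. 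This yields \cref{equ:4.17}.

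\emph{Proof of \cref{equ:4.18}.} Take $\rvy=\rvx_*$, which is deterministic, and sum over $\cI_N$. Here $\E f_{\pi_k^i}(\rvx_*)=f(\rvx_*)$ because $\pi_k^i$ is marginally uniform. The telescoped distance term is at most $D^2/(2\eta)$, and the remaining terms give the stated bound.

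The main obstacle is the $k=p$ case of \cref{equ:4.16}. The exchangeability must be applied conditionally on exactly the information that determines $\rvx_p^q$, namely the first $q-1$ entries of $\pi_p$, and the count of same-epoch pairs must respect truncation at $N$. Everything else is routine telescoping.
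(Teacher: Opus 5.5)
Your proof is correct and follows the paper's argument essentially step for step. The shared steps are these: conditional uniformity of the unrevealed entries given $\cF_p^q$ (and independence of later epochs) for \eqref{equ:4.16}; the pathwise convexity-plus-distance-expansion inequality, with the random comparison point $\rvx_p^q$ substituted before taking expectations; and the deterministic choice $\rvx_*$ with marginal uniformity for \eqref{equ:4.18}. The one detail the paper states explicitly and you leave implicit is that, when $N=nK+1$, the telescoped successor $\rvx_{k_N}^{i_N+1}$ comes from one auxiliary update used only in the analysis. This is harmless, since its distance term is discarded as nonpositive.
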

We now apply the suffix-to-last identity to these inequalities. {Here, the \emph{comparator bias} refers to $\E[f_{\pi_{k}^{i}}(\rvx_{p}^{q}) - f(\rvx_{p}^{q})]$, the discrepancy between the sampled component loss and the full objective at the random comparison point $\rvx_{p}^{q}$. By~\cref{equ:4.16}, this quantity equals $-\beta_{p}^{q}\mathbf{1}_{\{k=p\}}$. We retain its sign rather than bound it separately. Combining its contribution with the trajectory sampling bias contribution yields the weights below.}
\begin{lemma}
\label{lem:4.5}
    We have
    \begin{equation}
        \label{equ:4.20}
        \begin{aligned}
            &\Delta_{N}\leq \frac{D^2}{2\eta N} + \frac{\eta}{2}\sum_{(k, i)\in \cI_{N}}w_{N, k}^{i}\E\sqbk{\ltwonorm{\nabla f_{\pi_{k}^{i}}(\rvx_{k}^{i})}^2}\\
            &\qquad + \sum_{(k, i)\in \cI_{N}}v_{N, k}^{i}\beta_{k}^{i}            
        \end{aligned}
    \end{equation}
    where $v_{N,k}^{i}$, $w_{N,k}^{i}$ are defined as
    \begin{align*}
        w_{N, k}^{i}\! :=&\frac{1}{\max\set{N-\tau(k, i), 1}}\\
        v_{N,k}^{i}\!:=&\!\begin{cases}
        \dfrac{N - e_{N, k}^{i}}{(N-\tau(k, i))(N- \tau(k, i) + 1)},\!\!\!  &\tau(k, i)<N,\\
        1, \!\!\! &\tau(k, i)=N.
    \end{cases}
    \end{align*}
\end{lemma}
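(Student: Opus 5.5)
We apply the standard suffix-to-last identity of Orabona/Shamir--Zhang type to the global sequence $\Delta_1,\dots,\Delta_N$ (indexed by $t=\tau(k,i)$). For $1\le t\le N$ let $S_t:=\frac{1}{N-t+1}\sum_{s=t}^{N}\Delta_s$ denote the suffix average, so $S_N=\Delta_N$ and $S_1=\frac1N\sum_{s\le N}\Delta_s$. The identity
\[
S_{t+1}-S_t=\frac{1}{N-t}\bigl(S_t-\Delta_t\bigr)=\frac{1}{(N-t)(N-t+1)}\sum_{s=t}^{N}(\Delta_s-\Delta_t),\qquad t<N,
\]
telescopes to
\[
\Delta_N=S_1+\sum_{t=1}^{N-1}\frac{1}{(N-t)(N-t+1)}\sum_{s=t}^{N}(\Delta_s-\Delta_t).
\]
We bound $S_1$ by \cref{equ:4.18} divided by $N$. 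This gives $D^2/(2\eta N)$, plus $\frac1N\sum\beta$, plus $\frac{\eta}{2N}\sum Q$, where $Q_k^i:=\E\|\nabla f_{\pi_k^i}(\rvx_k^i)\|^2$.

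For each $t=\tau(p,q)<N$ we bound the inner sum by \cref{equ:4.17}. Each such $t$ contributes three pieces with weight $c_t:=1/((N-t)(N-t+1))$: the comparator term $-c_t(e_{N,p}^q-t+1)\beta_p^q$, the trajectory term $c_t\sum_{s\ge t}\beta_s$, and the gradient term $c_t\frac{\eta}{2}\sum_{s\ge t}Q_s$.

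Next we swap the order of summation and collect coefficients at each position $s$. Using $\sum_{t\le s} c_t=\frac{1}{N-s}-\frac1N$ for $s<N$, and $=1-\frac1N$ for $s=N$, the gradient coefficient plus the $S_1$ part is
\[
\frac{\eta}{2}\Bigl(\frac1N+\frac{1}{N-s}-\frac1N\Bigr)=\frac{\eta}{2}\cdot\frac{1}{\max\{N-s,1\}},
\]
which is exactly $\frac{\eta}{2}w_{N,k}^i$. The $s=N$ case gives $1$, consistent with $\max\{N-\tau,1\}$.

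For the bias at $s=\tau(k,i)$, the trajectory part plus the $S_1$ part similarly gives $1/\max\{N-s,1\}$. To this we add the comparator coefficient $-c_s(e_{N,k}^i-s+1)$. For $s<N$ the total is
\[
\frac{(N-s+1)-(e_{N,k}^i-s+1)}{(N-s)(N-s+1)}=\frac{N-e_{N,k}^i}{(N-s)(N-s+1)}=v_{N,k}^i .
\]
For $s=N$ no comparator term occurs, since $t$ ranges over $t<N$ only, and the coefficient is $1=v_{N,k}^i$. Combining the pieces gives \cref{equ:4.20}.

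The main care point is the bookkeeping. Each inequality \cref{equ:4.17} must be applied with nonnegative multipliers $c_t>0$, which holds, so the signs are fine. The comparator term must then be attached to the position $s=t$ of its own $\beta_p^q$ rather than bounded separately; that is what produces the cancellation giving $v$. One should also check the identity $\sum_{t\le s}c_t=\frac{1}{N-s}-\frac1N$ by telescoping $\frac{1}{N-t}-\frac{1}{N-t+1}$, and treat the endpoint $s=N$ separately.
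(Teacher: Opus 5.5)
Your proposal is correct and follows the paper's proof essentially line by line. It uses the same suffix-average telescoping with weights $c_{N,p}^{q}=1/\bigl((N-\tau(p,q))(N-\tau(p,q)+1)\bigr)$, applies \eqref{equ:4.18} to the full average and \eqref{equ:4.17} to each suffix, and exchanges the order of summation to obtain $w_{N,k}^{i}$ and, once each comparator term is attached to its own position, $v_{N,k}^{i}$; your separate handling of $s=N$ matches the paper's use of $\min\{\tau(k,i),N-1\}$.
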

{The weights play different roles: $w_{N,k}^i$ weights the sampled-gradient moments, whereas $v_{N,k}^i$ is the residual coefficient of the signed sampling bias. The cancellation is exact at every nonterminal position in the final truncated epoch, where $v_{N,k}^i=0$. Moreover, the residual weights are nonnegative and satisfy the terminal-inclusive block-sum bound established in Appendix~\ref{app:subsec:proof_of_prop:weighted_gradient_endpoint_inequality}. These properties allow us to apply~\cref{equ:4.10} and absorb its within-epoch history terms into $W_N$, yielding Proposition~\ref{prop:weighted_gradient_endpoint_inequality}.}

% Intuitively, $w_{N, k}^{i}$ is the gradient weights generated by suffix transformation, and $v_{N, k}^{i}$ is the gradient weights obtained after retaining the comparator bias. This nonnegativity permits the use of the one-sided bias bound, while their block-sum control prevents within-epoch repetitions from introducing an additional factor proportional to $n$. 

%The new technique introduced here is the combination of the suffix identify with the deviation from the exact comparison point to obtain the weights $v_{N,k}^{i}$ that satisfy both the non-negativity condition and the in-round summation constraint.

%The non-negativity property permits the use of a unilateral deviation bound; the in-wheel summation bound, in turn, ensures that historical deviations can be converted into existing endpoint gradient weights. Together, these two form the basis for subsequent recursive calculations.

% We can now eliminate the explicit bias terms $\beta_{k}^{i}$. Applying the self-bounding estimate for $\beta_k^i$ in~\cref{equ:4.10} and exchanging the order of summation and combine the weight relation of $v_{N, k}^{i}$ and $w_{N,k}^{i}$ with $v_{N, k}^{i} \leq w_{N, k}^{i}$ (Lemma~\ref{lem:weight_relation}) satisfies the block-sum inequality absorbs the within-epoch history terms into the weighted sampled-gradient moments $W_N$.
\begin{proposition}
\label{prop:weighted_gradient_endpoint_inequality}
    If $\eta L \leq 1/2$, then for any $N\geq 2$
    \begin{equation}
        \Delta_N \leq \frac{D^2}{2\eta N} +4\eta \underbrace{\sum_{(k,i)\in \cI_{N}}w_{N, k}^{i}\E\sqbk{\ltwonorm{\nabla f_{\pi_{k}^{i}}(\rvx_{k}^{i})}^2}}_{:=W_N} \label{equ:4.25}
    \end{equation}
\end{proposition}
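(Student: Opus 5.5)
The plan is to start from Lemma~\ref{lem:4.5} and eliminate the bias term $\sum_{(k,i)\in\cI_N} v_{N,k}^i\beta_k^i$ using the one-sided self-bound~\cref{equ:4.10}. This is legitimate because the weights $v_{N,k}^i$ are nonnegative. For $\tau(k,i)<N$ we have $N\ge e_{N,k}^i$, so the numerator $N-e_{N,k}^i\ge 0$ and the denominator is positive; in the terminal case $v=1$. Since $\eta L\le 1/2$, \cref{equ:4.10} applies at each position. Writing $Q_k^i:=\E\|\nabla f_{\pi_k^i}(\rvx_k^i)\|^2$, we multiply by $v_{N,k}^i\ge 0$ and sum to get
\[
\sum_{(k,i)\in\cI_N} v_{N,k}^i\beta_k^i\le \frac{\eta}{2}\sum_{(k,i)\in\cI_N} v_{N,k}^i Q_k^i+\frac{3\eta}{n}\sum_{(k,i)\in\cI_N} v_{N,k}^i\sum_{(k,j)\in\cP_k^i}Q_k^j .
\]

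Next I would compare the weights, showing $v_{N,k}^i\le w_{N,k}^i$. For $\tau<N$ this holds because $N-e_{N,k}^i\le N-\tau(k,i)$, so
\[
v_{N,k}^i\le \frac{1}{N-\tau(k,i)+1}\le w_{N,k}^i ;
\]
at $\tau=N$ both weights equal $1$. This makes the direct term at most $\frac{\eta}{2}W_N$.

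For the history term, I exchange the order of summation. Then $Q_k^j$ receives the coefficient $\frac{3\eta}{n}\sum_{i>j,\,(k,i)\in\cI_N} v_{N,k}^i$, and the goal is the block-sum bound
\[
\sum_{i\ge j,\ (k,i)\in\cI_N} v_{N,k}^i\le C\,n\,w_{N,k}^j
\]
with a small constant $C$, ideally $C\le 2$.

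\textbf{Non-final epochs.} Here $e=nk<N$. Set $m=N-nk\ge 1$ and $a=N-\tau(k,i)$, so the sum is a telescoping sum $\sum m/(a(a+1))=m\bigl(\tfrac{1}{a_{\min}}-\tfrac{1}{a_{\max}+1}\bigr)$. The smallest value of $a$ is $N-nk=m$, attained at $i=n$. This gives a sum at most $m\cdot\tfrac1m=1\le n\,w_{N,k}^j$, because $w_{N,k}^j\ge 1/(N-\tau(k,j))$ and $N-\tau(k,j)\le N-nk+n$. I need to be careful here: I actually need $\le n w$, and since $N-\tau(k,j)\le m+n$ this only gives $w\ge 1/(m+n)$. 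That is not directly enough when $m\gg n$. The fix is the sharper estimate
\[
m\Bigl(\tfrac1m-\tfrac1{N-\tau(k,j)+1}\Bigr)=\frac{m\,(N-\tau(k,j)+1-m)}{m\,(N-\tau(k,j)+1)}\le \frac{n}{N-\tau(k,j)+1}\le n\,w_{N,k}^j ,
\]
which works because $N-\tau(k,j)+1-m\le n$. I expect this telescoping estimate to be the main delicate step.

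\textbf{Final truncated epoch.} Here all nonterminal $v$ vanish and only the terminal weight $1$ remains. Since $N-\tau(k,j)\le n$, we get $1\le n\,w_{N,k}^j$.

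Combining, the history term is at most $\frac{3\eta}{n}\cdot n\sum w_{N,k}^j Q_k^j=3\eta W_N$. Adding the pieces gives
\[
\Delta_N\le \frac{D^2}{2\eta N}+\Bigl(\frac12+\frac12+3\Bigr)\eta W_N=\frac{D^2}{2\eta N}+4\eta W_N ,
\]
which is~\cref{equ:4.25}.
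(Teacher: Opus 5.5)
Your proposal is correct and follows the paper's proof essentially step for step. You apply \cref{equ:4.10} with the nonnegative weights $v_{N,k}^i$, use $v_{N,k}^i\le w_{N,k}^i$, exchange the order of summation, and bound the within-epoch block sum of $v$ by $n\,w_{N,k}^j$ (telescoping in complete epochs, the lone terminal weight in the final epoch) to get $\tfrac{\eta}{2}+\tfrac{\eta}{2}+3\eta=4\eta$; your telescoping sums over $i\ge j$ rather than $i>j$, which is harmless since it only enlarges a sum of nonnegative terms.
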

Proposition~\ref{prop:weighted_gradient_endpoint_inequality} reduces the
last-iterate analysis to controlling a weighted sum of
sampled-gradient second moments and the detailed proof is provided in the Appendix~\ref{app:subsec:proof_of_prop:weighted_gradient_endpoint_inequality}. In~\cref{equ:4.25}, the sampling bias no longer appears explicitly. 
%We next use the second self-bounding inequality to express these moments in terms of objective gaps and close the resulting recursion.

\subsection{Closing the Recursion: The Large-Step Guarantee}
\label{subsec:technical_proof_part_3}
We now control the weighted gradient moments in Proposition~\ref{prop:weighted_gradient_endpoint_inequality}. Multiplying the sampled-gradient self-bound~\cref{equ:4.11} by $w_{N,k}^i$, summing over the horizon, and applying the block-sum estimate~\cref{equ:4.26} gives
\begin{align}
    &W_N \leq 8L \sum_{(k,i)\in\cI_N} w_{N, k}^{i}\Delta_{k}^{i}  + 4\sigma_{*}^2 \sum_{(k,i)\in\cI_N}w_{N, k}^{i}\nonumber\\
    & +8 \eta^2 L^2 \sum_{(p, q)\in \cI_{N}}\E\sqbk{\ltwonorm{\nabla f_{\pi_{p}^{q}}(\rvx_{p}^{q})}^2}{\bigg(\frac{1}{n}\sum_{\substack{(k,i)\in\cI_N\\ (p,q)\in\cP_k^i}}w_{N,k}^{i}\bigg)}\label{equ:second_moment_weight}
\end{align}
The weight of $\E[\|\nabla f_{\pi_{k}^{i}}(\rvx_{k}^{i})\|^2]$ in~\cref{equ:second_moment_weight} can be controlled by the harmonic weights:
\begin{lemma}
\label{lem:harmonic_weight_block_sum}
    For every $(p, q)\in \cI_{N}$, let $\cH_{N}:=\sum_{l=1}^{N}\frac{1}{l}$ be the $N$-th harmonic number, then we have
    \begin{equation}
        \frac{1}{n}\sum_{(k, i)\in \cI_{N}, (p, q)\in \cP_{k}^{i}} w_{N, k}^{i} \leq 2\cH_{N}w_{N, p}^{q}.\label{equ:4.26}
    \end{equation}
\end{lemma}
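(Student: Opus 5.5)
The plan is to reduce the left-hand side of \cref{equ:4.26} to an explicit sum of reciprocals along one epoch and then bound it by elementary case analysis. Fix $(p,q)\in\cI_N$. A pair $(k,i)\in\cI_N$ satisfies $(p,q)\in\cP_k^i$ exactly when $k=p$, $i>q$ and $\tau(p,i)\le N$. So the left-hand side equals
\[
\frac{1}{n}\sum_{\substack{q<i\le n\\ \tau(p,i)\le N}} \frac{1}{\max\{N-\tau(p,i),1\}}.
\]
Set $m:=N-\tau(p,q)\ge 0$, so that $w_{N,p}^q=1/\max\{m,1\}$. Then $N-\tau(p,i)=m-(i-q)$. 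As $i$ runs over the admissible indices, these exponents take distinct values among $m-1,m-2,\dots$, all of them $\ge 0$. There are at most $\min\{n-q,m\}\le\min\{n-1,m\}$ of them. If $m=0$ the sum is empty and there is nothing to prove, so assume $m\ge1$. The target is then $(1/n)\,S\le 2\cH_N/m$, where $S$ denotes the inner sum.

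First I would record a crude bound that holds in every case. The denominators are distinct integers $\max\{j,1\}$ with $0\le j\le m-1$, and at most one of them is clamped from $0$ to $1$. Hence
\[
S\le 1+\cH_{m-1}\le 2\cH_N ,
\]
using $m\le N$. The argument then splits into three regimes.

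\emph{Case $m\le n$.} Here $1/n\le 1/m$, so $(1/n)\,S\le 2\cH_N/m$ follows directly from the crude bound.

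\emph{Case $n<m<2n$.} Now every exponent satisfies $j\ge m-(n-1)\ge 2$, so no clamping occurs and $S\le \cH_{m-1}\le\cH_N$. Combining this with $1/n<2/m$ gives $(1/n)\,S\le 2\cH_N/m$.

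\emph{Case $m\ge 2n$.} There are at most $n$ terms, each at most $1/(m-n+1)\le 1/(m-n)\le 2/m$. Averaging with the prefactor $1/n$ gives $(1/n)\,S\le 2/m\le 2\cH_N/m$, since $\cH_N\ge1$.

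The only delicate points are the bookkeeping ones. One must correctly identify the index set $\{(k,i):(p,q)\in\cP_k^i\}$ as the later positions in the same epoch, truncated at the horizon $N$. One must also handle the clamp $\max\{\cdot,1\}$ at the terminal position $\tau(k,i)=N$, which is what introduces the extra $+1$ and hence the factor $2$. I do not expect a genuine obstacle. The main care needed is the case split, which balances the $1/n$ prefactor against the number of terms (at most $n$) when $m$ is large, and against the harmonic growth of $S$ when $m\lesssim n$.
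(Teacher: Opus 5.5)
Your proof is correct and follows the paper's argument. You use the same identification of the contributing pairs as the later positions of epoch $p$ up to the horizon $N$. You also treat the case $m\le n$ (the paper's $a$) the same way, via $1/n\le 1/m$ and $S\le \cH_{m-1}+1$, with the $+1$ coming from the clamped terminal weight. The only difference is the regime $m>n$. The paper handles it in one step through the termwise comparison $m/(m-j)\le n/(n-j)$ for $1\le j\le n-1$, which yields $S/n\le \cH_{n-1}/m$. You instead split it into $n<m<2n$ and $m\ge 2n$, and both routes are valid.
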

Plug~\cref{equ:4.26} into~\cref{equ:second_moment_weight}, we have
\begin{equation}
    \label{equ:4.27}
    \begin{aligned}
        &W_{N}\leq 8L\sum_{(k,i)\in\cI_N} w_{N, k}^{i}\Delta_{k}^{i} + 4\sigma_{*}^{2} (\cH_{N-1} + 1)\\
        &\qquad + 16\eta^{2}L^2\cH_{N}W_N        
    \end{aligned}
\end{equation}
Under $\eta L \cH_{N}\leq 1/512$, the last term can be absorbed into the left-hand side, yielding
\begin{equation}
    W_N\leq 16L \sum_{(k,i)\in\cI_N} w_{N, k}^{i}\Delta_{k}^{i} +8\sigma_{*}^{2}(\cH_{N-1} + 1)\label{equ:4.29}
\end{equation}
Substituting~\cref{equ:4.29} into~\cref{equ:4.25}, we obtain
\begin{equation}
    \Delta_{N}\leq \frac{D^2}{2\eta N} + 64\eta L \sum_{(k,i)\in\cI_N} w_{N, k}^{i}\Delta_{k}^{i} + 32\eta \sigma_{*}^{2}(\cH_{N-1} + 1)\label{equ:4.30}
\end{equation}

% The following block-sum estimate controls their total contribution under the harmonic weights.
% \begin{lemma}[Harmonic-weight block sum]
% \label{lem:harmonic_weight_block_sum}
%     For every $(p, q)\in \cI_{N}$, we have 
%     \begin{equation}
%         \frac{1}{n}\sum_{\substack{(k, i)\in \cI_{N}\\ (p, q)\in \cP_{k}^{i}}} w_{N, k}^{i} \leq 2\cH_{N}w_{N, p}^{q}.\label{equ:4.26}
%     \end{equation}
% \end{lemma}

Thus, the weighted second moments $W_N$ of the sampled gradients are controlled by the weighted error sequence and noise term. The remaining step is to close the recursion in the objective gaps. Since the terminal weight equals one,
\[
\sum_{(k,i)\in\cI_N} w_{N, k}^{i}\Delta_{k}^{i}=\Delta_N+\sum_{(k, i)\in \cI_{N}, \tau(k, i)<N}\frac{\Delta_{k}^{i}}{N-\tau(k, i)}.
\]

The identity $\sum_{t=1}^{N-1}[t(N-t)]^{-1}=2H_{N-1}/N$ shows that suffix weighting preserves the $1/N$ decay of the initial-error term up to a harmonic factor. After isolating the terminal gap in~\cref{equ:4.30}, the induction closes under $\eta LH_N\lesssim1$ rather than $\eta nL\lesssim1$. We then have the lemma below.

\begin{lemma}[All-time last-iterate bound]
    \label{lem:all_time_last_iterate_bound}
    For a fixed $N\geq 2$, let {$N=\tau(k_N,i_N)$ and $\Delta_N=\Delta_{k_N}^{i_N}$}. If $\eta L \cH_{N}\leq 1/512$, then
    \[
    \Delta_N \leq \frac{D^2}{\eta N} + 128 \eta\sigma_{*}^{2}\cH_{N}.
    \]
\end{lemma}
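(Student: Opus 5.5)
The plan is strong induction on $N$, starting from~\cref{equ:4.30}. For each $M\le N$ we have $\cH_M\le\cH_N$, so the hypothesis $\eta L\cH_M\le 1/512$ holds at every earlier horizon. Hence~\cref{equ:4.30} is available at every $M\in\{2,\dots,N\}$. The induction hypothesis for $2\le M<N$ is
\[
\Delta_M\le \frac{D^2}{\eta M}+128\eta\sigma_*^2\cH_M .
\]
For $M=1$ we use $\Delta_1=f(\rvx_1^1)-f(\rvx_*)\le LD^2/2$. This is the one place where the induction has no earlier bound to lean on. I would handle it either by noting $LD^2/2\le D^2/(1024\eta\Lambda)\le D^2/\eta$, using $\eta L\le 1/512$, or by weakening the target slightly at index $1$.

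Next I isolate the terminal term. In~\cref{equ:4.30} the terminal weight is one, so the right-hand side contains $64\eta L\Delta_N$. Since $64\eta L\le 64/512=1/8$, moving it to the left gives
\[
\Delta_N\le \frac{8}{7}\Big[\frac{D^2}{2\eta N}+64\eta L\sum_{t=1}^{N-1}\frac{\Delta_t}{N-t}+32\eta\sigma_*^2(\cH_{N-1}+1)\Big].
\]
Here $\Delta_t$ denotes the gap at global index $t$; the sum over $\tau(k,i)<N$ is exactly the sum over $t=1,\dots,N-1$. I then insert the induction hypothesis into the sum and split it into two parts.

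\emph{Initial-error part.} Using $\sum_{t=1}^{N-1}[t(N-t)]^{-1}=2\cH_{N-1}/N$, this part is at most
\[
64\eta L\cdot\frac{D^2}{\eta}\cdot\frac{2\cH_{N-1}}{N}\le \frac{128\,\eta L\cH_N D^2}{\eta N}\le \frac{D^2}{4\eta N}.
\]
\emph{Noise part.} Using $\cH_t\le\cH_N$ and $\sum_{t<N}1/(N-t)=\cH_{N-1}$, this part is at most
\[
64\eta L\cdot128\eta\sigma_*^2\cH_N\cH_{N-1}\le 16\eta\sigma_*^2\cH_N,
\]
again by $\eta L\cH_N\le1/512$.

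Collecting terms, and using $\cH_{N-1}+1\le 2\cH_N$ (valid since $N\ge2$ gives $\cH_N\ge 3/2$), the bracket is at most
\[
\Big(\frac12+\frac14\Big)\frac{D^2}{\eta N}+(16+64)\eta\sigma_*^2\cH_N .
\]
Multiplying by $8/7$ gives at most $\frac{6}{7}\frac{D^2}{\eta N}+92\,\eta\sigma_*^2\cH_N$, which is within the target constants $1$ and $128$. This closes the induction.

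The main obstacle is bookkeeping the constants so that the induction closes with exactly $1$ and $128$, together with the treatment of index $t=1$. The bound at $t=1$ is $\Delta_1\le LD^2/2$ rather than the induction form. Its contribution to the sum is only $64\eta L\cdot\Delta_1/(N-1)\lesssim (\eta L)^2 D^2/(\eta N)$, which is negligible, so there is ample slack.
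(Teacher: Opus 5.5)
Your proposal is correct and follows the paper's proof essentially step for step. It uses strong induction with base case $\Delta_1^1\le LD^2/2\le D^2/\eta$, the identity $\sum_{t<N}[t(N-t)]^{-1}=2\cH_{N-1}/N$, the bound $\sum_{t<N}\cH_t/(N-t)\le\cH_N\cH_{N-1}$, and absorption of $64\eta L\Delta_N$ via $1-64\eta L\ge 7/8$, and it reaches the same constants $6/7$ and $640/7$. Two small notes: in the base case you should cite $\eta L\cH_N\le 1/512$ rather than the theorem's $\Lambda$-condition, and your remark that the hypothesis holds at every $M\le N$ because $\cH_M\le\cH_N$ makes explicit a point the paper leaves implicit.
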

Reindex the horizon $N$ by the epoch and inner-iteration indices $(k,i)$, and recall $\tau(k,i)$ denotes the corresponding global position. Since $\Lambda = 1 + \log (nK)$ and $\cH_{nK + 1}\leq 2\Lambda$, the stepsize condition $\eta \leq 1/(1024 L \Lambda)$ gives
{\begin{align}
&\E\sqbk{f(\rvx_k^i)-f(\rvx_*)}\lesssim\frac{D^2}{\eta\tau(k,i)}+\Lambda\eta\sigma_*^2,
\label{equ:all_time_large_step}
\end{align}
for every fixed $(k,i)$ with $i\in[n]$ and $2\leq\tau(k,i)\leq nK+1$. At global position $nK+1$, we use the representation $(k,i)=(K+1,1)$. The trajectory may be extended by a fresh auxiliary permutation for this next epoch; the comparison uses its first update only in the analysis and requires no additional gradient evaluation by the algorithm.}

~\cref{equ:all_time_large_step} controls every component-time iterate with an SGD-scale noise term throughout the admissible stepsize range. For smaller stepsizes, however, a reshuffling-specific quadratic noise term can be sharper. We combine the large-step estimate with the following existing small-step endpoint bound in Lemma~\ref{lem:theorem_c_1} and a within-epoch comparison in Lemma~\ref{lem:within_epoch_comparison} to recover that dependence in Section~\ref{subsec:putting_together}.

% The preceding lemma controls every component-time iterate. To obtain the returned endpoint after $T=nK$ updates, we apply it at global position $N=T+1$, corresponding to $\rvx_{K+1}^1$. \textcolor{blue}{This gives the following special case.}
% \begin{theorem}[Large-step endpoint bound]
%     \textcolor{blue}{Let $\Lambda=1+\log(nK)$ and $\eta_k\equiv\eta$. When $0<\eta \leq \tfrac{1}{1024 L \Lambda }$,}
%     \[
%     \E\sqbk{f(\rvx_{K+1}^{1}) - f(\rvx_{*})} \lesssim \textcolor{blue}{\frac{D^2}{\eta nK}} + 256\Lambda \eta \sigma_{*}^{2}.
%     \]
% \end{theorem}
% This establishes the new large-step estimate. It controls \textcolor{blue}{each fixed component-time iterate} with an SGD-scale noise term throughout the admissible stepsize range. For smaller stepsizes, however, a reshuffling-specific quadratic noise term can be sharper. We recover that dependence by combining the large-step estimate with \textcolor{blue}{an existing small-step endpoint bound and a within-epoch comparison}.
\subsection{Putting Together}
\label{subsec:putting_together}
The large-step estimate provides the noise term
$\eta\sigma_\star^2$ up to logarithmic factors.
To obtain our results in both small- and large-step regimes, we combine the following
last-iterate guarantee in previous work \citep{liu2024last}.
{
\begin{lemma}[Theorem C.1 in~\citet{liu2024last}]
    \label{lem:theorem_c_1}
    Under Assumptions~\ref{ass:minimizer}, \ref{ass:convexity}, and~\ref{ass:smoothness}, for any $m\geq2$, if $0<\eta\leq1/(4nL\sqrt{2(1+\log m)})$, the endpoint $\rvx_{m+1}^1$ after $m$ epochs of RR-SGD with constant stepsize $\eta$ satisfies
    \[
    \E\sqbk{f(\rvx_{m+1}^{1}) - f(\rvx_{*})}
    \lesssim\frac{D^2}{\eta nm}+\eta^2nL\sigma_*^2 \log m.
    \]
\end{lemma}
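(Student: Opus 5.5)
The plan is to prove Lemma~\ref{lem:theorem_c_1} by running a suffix-to-last argument at the \emph{epoch} level rather than the component level. The reason is that comparison points placed at epoch starts avoid comparator bias entirely: the iterate $\rvx_p^1$ is measurable with respect to $\pi_1,\ldots,\pi_{p-1}$ and hence independent of $\pi_k$ for every $k\geq p$. The price is that each epoch must be treated as a perturbed full-gradient step. Controlling the resulting drift is exactly where the condition $\eta nL\sqrt{1+\log m}\lesssim 1$ enters.

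\textbf{Step 1 (one-epoch inequality against an independent comparator).}
Fix an epoch $k$ and a random point $z$ independent of $\pi_k$, for example $z=\rvx_*$ or $z=\rvx_p^1$ with $p\leq k$. Expand $\|\rvx_{k+1}^1-z\|^2$ over the $n$ inner updates. For each $i$, apply convexity and $L$-smoothness of $f_{\pi_k^i}$ to the term $\langle\nabla f_{\pi_k^i}(\rvx_k^i),\rvx_k^i-z\rangle$, comparing $\rvx_k^i$ with the epoch endpoint $\rvx_{k+1}^1$. Summing over $i$ uses $\frac1n\sum_i f_{\pi_k^i}=f$, which holds because every component appears once per epoch. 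This should give
\[
2\eta n\,\E[f(\rvx_{k+1}^1)-f(z)]\leq \E\|\rvx_k^1-z\|^2-\E\|\rvx_{k+1}^1-z\|^2+ c\,\eta L\sum_{i=1}^n\E\|\rvx_k^i-\rvx_{k+1}^1\|^2 .
\]
The leftover $-\eta^2\|\cdot\|^2$ terms are either dropped or used, given $\eta\leq 1/(nL)$.

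\textbf{Step 2 (drift bound).}
Bound the within-epoch deviations in the standard Mishchenko et al.\ style. Write $\rvx_k^i-\rvx_k^1=-\eta\sum_{j<i}\nabla f_{\pi_k^j}(\rvx_k^j)$. Decompose each gradient as $\nabla f_{\pi_k^j}(\rvx_*)$ plus a Lipschitz remainder. For the first part, use the without-replacement variance identity, which gives $\E\|\sum_{j<i}\nabla f_{\pi_k^j}(\rvx_*)\|^2\leq i(n-i)\sigma_*^2/(n-1)$ because the $\nabla f_j(\rvx_*)$ average to zero. For the remainder, use co-coercivity, which bounds it by $L\,(f(\cdot)-f(\rvx_*))$-type terms. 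Absorbing a self-referential piece under $\eta nL\leq 1/4$, I expect
\[
\sum_i\E\|\rvx_k^i-\rvx_{k+1}^1\|^2\lesssim \eta^2n^3 L\,\E[f(\rvx_{k+1}^1)-f(\rvx_*)]+\eta^2 n^3 L\,\E[f(z)-f(\rvx_*)]\text{-free terms}+\eta^2n^2\sigma_*^2 ,
\]
with possibly also a term in $f(\rvx_k^1)-f(\rvx_*)$. Plugging this into Step 1 gives a per-epoch inequality. It has noise $c\,\eta^3Ln^2\sigma_*^2$, together with objective-gap terms of relative size $\eta^2n^2L^2$ multiplying gaps at $\rvx_*$, not at $z$.

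\textbf{Step 3 (suffix-to-last over epochs and closing).}
Sum the Step~1--2 inequality for $k=p,\ldots,m$ with $z=\rvx_p^1$. Then combine over $p$ via the standard identity of Shamir--Zhang/\citet{liu2024last}, which converts suffix averages into the last iterate with weights $1/(m-p)$. The harmonic factor $1+\log m$ multiplies both the noise, producing $\eta^2nL\sigma_*^2\log m$ after dividing by $\eta n$, and the gap-coupling coefficient $\eta^2n^2L^2$. I expect the hardest step to be this closing. The drift-induced gap terms $\eta^2n^2L^2(1+\log m)\sum_k w_k\Delta_{k+1}^1$ must be absorbed into the left side, with no extra factor left over. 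This is where $\eta\leq 1/(4nL\sqrt{2(1+\log m)})$ is used: it makes the coefficient at most a small constant times $1/\sqrt{\cdot}$ squared. The absorption then closes by an induction on $m$ analogous to Lemma~\ref{lem:all_time_last_iterate_bound}, using the base case $p=1$ with $z=\rvx_*$, which gives $D^2/(\eta nm)$.
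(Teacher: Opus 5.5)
The paper does not prove this statement. It imports it as a black box from Theorem~C.1 of \citet{liu2024last}, and in the proof of Theorem~\ref{thm:main} it quotes only the explicit form $\Delta_{m+1}^1\le D^2/(\eta nm)+\tfrac{16}{3}\eta^2nL\sigma_*^2(1+\log m)$.

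Your sketch is a correct self-contained reconstruction. It amounts to running the paper's own suffix-to-last machinery at epoch granularity: the identity behind Lemma~\ref{lem:4.5}, followed by the induction of Lemma~\ref{lem:all_time_last_iterate_bound}. Step~1 holds with $c=1$.

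In Step~2, the remainder needs care, and the shift is necessary rather than optional. Applying co-coercivity at $\rvx_k^j$ produces Bregman terms of $f_{\pi_k^j}$ at different points, and their expectations are not objective gaps; this is the sampling bias reappearing. You must therefore:
\begin{itemize}
\item first move $\nabla f_{\pi_k^j}(\rvx_k^j)-\nabla f_{\pi_k^j}(\rvx_*)$ to the common point $\rvx_{k+1}^1$ by smoothness;
\item use that at this common point the component sum equals $n(f(\rvx_{k+1}^1)-f(\rvx_*))$ pathwise;
\item then absorb the resulting $2L^2\ltwonorm{\rvx_k^j-\rvx_{k+1}^1}^2$ terms.
\end{itemize}
Expanding $\rvx_{k+1}^1-\rvx_k^i=-\eta\sum_{j\ge i}\nabla f_{\pi_k^j}(\rvx_k^j)$ directly in this way gives, per epoch, a gap coupling of $8\eta^2n^2L^2\Delta_{k+1}^1$ and noise $O(\eta^2nL\sigma_*^2)$. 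The stated threshold then gives $8\eta^2n^2L^2(1+\log m)\le 1/4$. This suffices to close the induction with epoch weights $1/(m+2-t)$; a hypothesis of the form $\Delta_t^1\le 2D^2/(\eta n(t-1))+\tfrac43\eta^2nL\sigma_*^2\cH_{t-1}$ works. Your without-replacement formula has a harmless index slip: a prefix of $i-1$ positions gives $(i-1)(n-i+1)\sigma_*^2/(n-1)$.

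One part of your motivation is off. Comparator bias does not vanish at the epoch level because $\rvx_p^1$ is independent of $\pi_k$. It vanishes because Step~1 is a pathwise inequality valid for every $z$: a full epoch gives $\frac1n\sum_i f_{\pi_k^i}=f$ for each realization, and the drift bound does not involve $z$. The only probabilistic input is the uniform marginal of $\pi_k$ in the variance term.

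As for what each route buys:
\begin{itemize}
\item The citation supplies explicit constants and the proximal generality of \citet{liu2024last}.
\item Your derivation shows that the $1/n$ in the threshold comes exactly from absorbing the epoch drift coupling $\eta^2n^2L^2$ against the harmonic weights. That is the cost Theorem~\ref{thm:main} avoids by using component-level comparators, paying instead with the signed comparator bias of Lemma~\ref{lem:exact_comparator_bias_identify}.
\end{itemize}
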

The following comparison extends this endpoint guarantee to the iterates within the next epoch.
\begin{lemma}[Within-epoch comparison]
    \label{lem:within_epoch_comparison}
    Under Assumptions~\ref{ass:minimizer}, \ref{ass:convexity}, and~\ref{ass:smoothness}, suppose each epoch uses a fresh independent uniform permutation and $0<\eta nL\leq1/2$. Then, for every epoch $k$ and every $1\leq i\leq n+1$,
    \[
    \Delta_k^i\leq6\Delta_k^1+8\eta^2nL\sigma_*^2.
    \]
\end{lemma}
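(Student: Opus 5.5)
Fix an epoch $k$ and write $\rvx^j:=\rvx_k^j$, $\pi:=\pi_k$. The plan is a direct within-epoch comparison. I will bound the deviation $\rvx^i-\rvx^1$ by the partial sum of gradients $\eta\sum_{j<i}\nabla f_{\pi^j}(\rvx^j)$. Then I will control it by splitting each gradient as $\nabla f_{\pi^j}(\rvx^j)=[\nabla f_{\pi^j}(\rvx^j)-\nabla f_{\pi^j}(\rvx^1)]+\nabla f_{\pi^j}(\rvx^1)$. The first piece is Lipschitz in the deviation. The second piece is a without-replacement partial sum of the fixed vectors $\{\nabla f_l(\rvx^1)\}_{l}$, evaluated at a point $\rvx^1$ that is independent of $\pi$ because each epoch uses a fresh permutation.

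\textbf{Step 1 (function-value transfer).} By $L$-smoothness of $f$ (each $f_l$ is $L$-smooth, hence so is $f$) and Young's inequality,
\[
f(\rvx^i)-f(\rvx_*)\le f(\rvx^1)-f(\rvx_*)+\langle\nabla f(\rvx^1),\rvx^i-\rvx^1\rangle+\tfrac L2\|\rvx^i-\rvx^1\|^2 .
\]
Using $\|\nabla f(\rvx^1)\|^2\le 2L(f(\rvx^1)-f(\rvx_*))$ together with Young's inequality, this gives
\[
\Delta_k^i\le 2\Delta_k^1+L\,\E\|\rvx^i-\rvx^1\|^2 .
\]

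\textbf{Step 2 (deviation bound).} Let $u_i:=\|\rvx^i-\rvx^1\|$. By the triangle inequality,
\[
u_i\le \eta L\sum_{j<i}u_j+\eta\Big\|\sum_{j<i}\nabla f_{\pi^j}(\rvx^1)\Big\| .
\]
Since $\eta L(i-1)\le \eta nL\le 1/2$, a discrete Gr\"onwall/induction argument (taking the maximum over $j\le i$) gives
\[
\max_{j\le i}u_j\le 2\eta\max_{m<i}\Big\|\sum_{j\le m}\nabla f_{\pi^j}(\rvx^1)\Big\| .
\]
To avoid a maximal inequality, I will instead square the bound and apply Cauchy--Schwarz on the $u_j$ sum. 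This produces $\E u_i^2\le 2\eta^2L^2 (i-1)\sum_{j<i}\E u_j^2+2\eta^2\E\|S_{i-1}\|^2$, where $S_m$ is the partial sum. Induction with $\eta^2L^2n^2\le 1/4$ then yields $\E u_i^2\le 4\eta^2\max_{m<i}\E\|S_m\|^2$.

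\textbf{Step 3 (sampling without replacement).} Conditionally on $\rvx^1$, the vectors $a_l=\nabla f_l(\rvx^1)$ have mean $\bar a=\nabla f(\rvx^1)$. The standard without-replacement variance identity gives
\[
\E\|S_m\|^2\le m^2\|\bar a\|^2+\tfrac{m(n-m)}{n-1}\cdot\tfrac1n\sum_l\|a_l-\bar a\|^2\le m^2\|\bar a\|^2+m\,\tfrac1n\sum_l\|a_l\|^2 .
\]
Then I will bound
\[
\tfrac1n\sum_l\|a_l\|^2\le 2\sigma_*^2+\tfrac2n\sum_l\|\nabla f_l(\rvx^1)-\nabla f_l(\rvx_*)\|^2\le 2\sigma_*^2+4L(f(\rvx^1)-f(\rvx_*)),
\]
using convexity plus smoothness of each $f_l$ and $\sum_l\nabla f_l(\rvx_*)=0$. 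With $m\le n$, $\|\bar a\|^2\le 2L(f(\rvx^1)-f_*)$, and $L\cdot4\eta^2(2n^2L+4nL)\Delta_k^1\lesssim(\eta nL)^2\Delta_k^1\le\Delta_k^1$, the three steps combine into
\[
\Delta_k^i\le c_1\Delta_k^1+8\eta^2nL\sigma_*^2 .
\]

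The main obstacle is constant bookkeeping: matching exactly $6$ and $8$ requires choosing the Young splitting in Step 1 and the induction constants in Step 2 carefully. A weaker generic constant still suffices for Theorem~\ref{thm:main}. The case $i=n+1$ is covered by the same argument with $m=n$, where $S_n=n\bar a$.
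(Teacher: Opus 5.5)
Your proposal is correct and takes essentially the paper's route. It uses the same transfer $f(\rvx)-f(\rvx_*)\le 2G+L\|\rvx-\rvx_k^1\|^2$, the same split into a Lipschitz drift plus a without-replacement partial sum at the independent epoch start, and a Gr\"onwall-type induction; the paper's Minkowski recursion with factor $(1+\eta L)^{s-1}\le e^{1/2}$, and its centering at $\nabla f_l(\rvx_*)$ rather than $\nabla f(\rvx_k^1)$, are only cosmetic differences. Your bookkeeping in fact already gives the stated constants: the $\Delta_k^1$ coefficient is $2+8(\eta nL)^2+16(\eta nL)^2/n\le 6$ for $n\ge 2$, the noise term is at most $8\eta^2nL\sigma_*^2$, and $n=1$ is trivial because then $\sigma_*=0$ and the sampling-variance term vanishes.
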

The proof is given in Appendix~\ref{subsec:app:within_epoch_comparison}. It controls the squared displacement from the epoch's starting point using the second moment of a uniformly sampled prefix, then applies the smoothness of $f$.

Fix $(k,i)$ with $i\in[n]$ and $2\leq\tau(k,i)\leq nK+1$. We set threshold
\[
\eta_0:=\frac{1}{4nL\sqrt{2(1+\log K)}}.
\]

\paragraph{Small stepsizes (Lemma~\ref{lem:theorem_c_1}): $\eta\le\eta_0$.}
For $k\geq3$, apply Lemma~\ref{lem:theorem_c_1} with $m=k-1\leq K$ to the epoch's starting point $\rvx_k^1$, then apply Lemma~\ref{lem:within_epoch_comparison}. Since $\tau(k, i)\leq2n(k-1)$ and $1+\log(k-1)\leq\Lambda$, this gives
\[
\Delta_k^i\lesssim\frac{D^2}{\eta \tau(k, i)}+\Lambda\eta^2nL\sigma_*^2.
\]
For the first two epochs where $k\leq 2$, applying Lemma~\ref{lem:within_epoch_comparison} once or twice gives the same bound using $\Delta_1^1\leq LD^2/2$, $N\leq2n$, and $\eta nL\leq1/2$. Thus the bound covers every admissible $(k,i)$. Since $\eta nL<1$ and $\Lambda\geq1$, it implies the claimed $\Lambda^{3/2}\min\{\eta\sigma_*^2,\eta^2nL\sigma_*^2\}$ term.

\paragraph{Intermediate stepsizes (Lemma~\ref{lem:all_time_last_iterate_bound}):
$\eta>\eta_0$ and $\eta nL<1$.}
In this regime,
\[
\frac{1}{\eta nL}
<
4\sqrt{2(1+\log K)}
\leq4\sqrt{2\Lambda}.
\]
consequently, the noise term in the all-time large-step estimate~\cref{equ:all_time_large_step}
satisfies
\[
\Lambda\eta\sigma_\star^2
=
\frac{\Lambda}{\eta nL}
\eta^2nL\sigma_\star^2
\lesssim
\Lambda^{3/2}\eta^2nL\sigma_\star^2.
\]
Thus, the large-step estimate also yields the desired
reshuffling noise scale in the intermediate regime,
with an additional logarithmic factor.

\paragraph{Larger stepsizes (Lemma~\ref{lem:all_time_last_iterate_bound}): $\eta nL\ge1$.}
Here,
\[
\min\{\eta\sigma_\star^2,
       \eta^2nL\sigma_\star^2\}
=
\eta\sigma_\star^2,
\]
so~\cref{equ:all_time_large_step} gives the required bound directly.

Combining the three regimes proves Theorem~\ref{thm:main} for every fixed component-time iterate in the stated horizon. Taking $(k,i)=(K+1,1)$ recovers the returned endpoint. When $\sigma_*^2=0$, the largest admissible stepsize gives $\widetilde O(LD^2/\tau(k,i))$, and hence $\widetilde O(LD^2/(nK))$ at the endpoint.
}

\section{Discussion}\label{sec:discussion}
For convex, smooth components, we establish expected last-iterate guarantees for RR-SGD with constant component stepsizes up to order $1/(L\log(nK))$, removing the explicit $1/n$ factor from the stepsize restriction. The result applies at every prescribed component time, and the tuned endpoint rate matches the averaged-output rate up to logarithmic factors. Our analysis retains the signed bias at random comparison points and exploits its partial cancellation with the trajectory bias. The resulting weights, together with a sampled-gradient second-moment bound, allow the harmonic recursion to close without an extra factor of $n$.

An immediate question for future work is whether the logarithmic losses can be removed, permitting component stepsizes of order $1/L$. Non-constant stepsizes may also improve last-iterate convergence rates by using larger early steps to reduce the initial error and smaller later steps to control the noise contribution. Finally, retaining the individual smoothness constants $L_i$ in the drift estimates may clarify which error terms can depend on an average of these constants rather than their maximum.

\bibliography{reference}

\clearpage
\appendix
\appendixpage

\section{Notation and Technical Lemmas}
\subsection{Notation Tables}
For readability, we recall and summarize the notation used in the paper and define the auxiliary quantities used in the proof.
\begin{itemize}
    \item $f_{i}$: the $i$-th component function of the finite-sum optimization problem, $f_{i}$ is convex and differentiable.
    \item $f$: the optimization objective, $f(\rvx) = \frac{1}{n}\sum_{i=1}^{n}f_{i}(\rvx)$.
    \item $D$: the distance between the initial point and the minimizer, $D := \ltwonorm{\rvx_{1}^{1}-\rvx_{*}}$.
    \item $\rvx_k^i$: the iteration point at position $i$ of epoch $k$.
    \item $\eta$: the stepsize.
    \item $\sigma_{*}^2$: the variance of the gradients at the minimizer, $\sigma_{*}^{2} := \frac{1}{n} \sum_{i=1}^{n} \|\nabla f_{i}(\rvx_{*})\|^2$.
    \item $k,i$: the epoch index and the within-epoch position, respectively.
    \item $\tau(k,i)$: the global iteration index, $\tau(k,i):=(k-1)n+i$.
    \item $\pi_k^{i}$: the component index sampled at position $i$ of epoch $k$.
    \item $\Delta_k^i$: the expected suboptimality at position $i$ of epoch $k$, 
    \[\Delta_k^i:=\E\sqbk{f(\rvx_k^i)-f(\rvx_{*})}\]
    \item $Q_{k}^{i}$: the expected squared norm of the gradient at position $i$ of epoch $k$, 
    \[Q_{k}^{i}:=\E\sqbk{\ltwonorm{\nabla f_{\pi_k^{i}}(\rvx_k^i)}^2}\]
    \item $\beta_{k}^{i}$: the expected bias of the component function at position $i$ of epoch $k$, 
    \[
    \beta_{k}^{i}:=\E\sqbk{f(\rvx_k^i)-f_{\pi_k(i)}(\rvx_k^i)}
    \]
    \item $M_{k}^{i}$: the expected average squared norm of the gradients at position $i$ of epoch $k$, 
    \[
    M_k^i:=\E\sqbk{\frac{1}{n}\sum_{r=1}^{n}\ltwonorm{\nabla f_r(\rvx_k^i)}^2}
    \]
    \item $W_N$: the weighted sum of the expected squared norm of the gradients up to iteration $N$,
    \[
    W_N := \sum_{(k,i)\in\cI_N}w_{N,k}^{i}Q_k^i
    \]
    \item $V_N$: the weighted sum of the expected suboptimality up to iteration $N$,
    \[
    V_N := \sum_{(k,i)\in\cI_N}w_{N,k}^{i}\Delta_k^i
    \]
    \item $\cH_{N}$: the $N$-th harmonic number, $\cH_{N}:=\sum_{l=1}^{N}\frac{1}{l}$.
\end{itemize}

% We list the notation in Table~\ref{tab:parameters_ofMDP}.
% \begin{table}[ht]
%     \centering
%     \caption{Notation for RR-SGD}
%     \begin{tabular}{|>{\centering\arraybackslash}m{0.23\textwidth}|>{\centering\arraybackslash}m{0.67\textwidth}|}
%     \hline
%     $f_i$ & $f_i$ is convex and differentiable \\
%     $f$ & $f$ is the optimization objective \\
%     $D$ & $D := \ltwonorm{\rvx_{1}^{1}-\rvx_{*}}$ \\
%     $\rvx_k^i$ & iteration point at position $i$ of epoch $k$ \\
%     $\eta$ & stepsize \\
%     $\sigma_{*}^2$ & $\sigma_{*}^{2} := \frac{1}{n} \sum_{i=1}^{n} \|\nabla f_{i}(\rvx_{*})\|^2$ \\ 
%     $k,i$ & epoch index and within-epoch position, respectively \\
%     $\tau(k,i)$ & $\tau(k,i):=(k-1)n+i$, the global iteration index \\
%     $\pi_k(i)$ & component index sampled at position $i$ of epoch $k$ \\
%     $\Delta_k^i$ & $\Delta_k^i:=\E\sqbk{f(\rvx_k^i)-f(\rvx_{*})}$ \\
%     $Q_{k}^{i}$ & $Q_{k}^{i}:=\E\sqbk{\ltwonorm{\nabla f_{\pi_k(i)}(\rvx_k^i)}^2}$\\
%     $\beta_{k}^{i}$ & $\beta_{k}^{i}:=\E\sqbk{f(\rvx_k^i)-f_{\pi_k(i)}(\rvx_k^i)}$\\
%     $M_{k}^{i}$ & $M_k^i:=\E\sqbk{\frac{1}{n}\sum_{r=1}^{n}\ltwonorm{\nabla f_r(\rvx_k^i)}^2}$\\
% \hline
%     \end{tabular}
%     \label{tab:parameters_ofMDP}
% \end{table}

For $j<i$, the swapped permutation $\pi(i,j)$ is defined componentwise by
\[
\pi^{r}(i,j)=
\begin{cases}
\pi^i, & r=j,\\
\pi^j, & r=i,\\
\pi^r, & r\notin\{i,j\},
\end{cases}
\qquad r\in[n].
\]
For $j=i$, $\pi(i,i)=\pi$.

\subsection{Technical Lemmas}
The proofs of our main Theorem and Lemmas in the main text involve several technical lemmas, which are summarized below.
\begin{lemma}[Co-coercivity, Lemma 1 in~\citet{liu2026random}]
    \label{lem:co_coercivity}
    Let $h:\R^{d}\to \R$ be a differentiable convex function that is also $L$-smooth, then we have, for any $\rvx, \rvy\in \R^{d}$,
    \[
    \ltwonorm{\nabla h(\rvx) - \nabla h(\rvy)}^2 \leq 2L \bracket{h(\rvx) - h(\rvy) - \langle \nabla h(\rvy), \rvx - \rvy \rangle}
    \]
\end{lemma}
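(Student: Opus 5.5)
This is the standard co-coercivity inequality, so I would prove it via the classical Nesterov auxiliary-function argument. Fix $\rvy$ and define
\[
\phi(\rvz):=h(\rvz)-\langle \nabla h(\rvy),\rvz\rangle .
\]
Then $\phi$ is convex, differentiable and $L$-smooth, because it differs from $h$ by a linear term, and $\nabla\phi(\rvz)=\nabla h(\rvz)-\nabla h(\rvy)$. In particular $\nabla\phi(\rvy)=0$. Since $\phi$ is convex, this makes $\rvy$ a global minimizer of $\phi$.

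The key step is the standard descent-lemma bound. For any $L$-smooth $\phi$ and any $\rvz$, the quadratic upper bound $\phi(\rvw)\leq \phi(\rvz)+\langle\nabla\phi(\rvz),\rvw-\rvz\rangle+\frac{L}{2}\ltwonorm{\rvw-\rvz}^2$ holds. I would prove it by integrating the gradient along the segment and applying the Lipschitz property of $\nabla\phi$. Choosing $\rvw=\rvz-\frac{1}{L}\nabla\phi(\rvz)$ and using that $\rvy$ minimizes $\phi$ gives
\[
\phi(\rvy)\leq \phi(\rvw)\leq \phi(\rvz)-\frac{1}{2L}\ltwonorm{\nabla\phi(\rvz)}^2 .
\]

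Finally, I would set $\rvz=\rvx$ and substitute the definition of $\phi$. This gives $h(\rvy)-\langle\nabla h(\rvy),\rvy\rangle\leq h(\rvx)-\langle\nabla h(\rvy),\rvx\rangle-\frac{1}{2L}\ltwonorm{\nabla h(\rvx)-\nabla h(\rvy)}^2$. Rearranging and multiplying by $2L$ yields exactly the claimed inequality.

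There is no real obstacle here. The only points needing care are noting that convexity is what upgrades the stationary point $\rvy$ of $\phi$ to a global minimizer, and verifying the descent lemma from the Lipschitz-gradient assumption. Both are routine.
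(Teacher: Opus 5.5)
Your proof is correct and is essentially the argument the paper uses. The paper cites this lemma from \citet{liu2026random}, but it carries out the same derivation inside the proof of Lemma~\ref{lem:non_expansiveness}: your $\phi$ is the paper's $h_{\rvy}$ shifted by the constant $\phi(\rvy)$, and both arguments combine nonnegativity of $\phi-\phi(\rvy)$ (from convexity) with the descent-lemma bound at the point $\rvx-\frac{1}{L}\nabla\phi(\rvx)$.
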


\begin{lemma}[Lemma 2 in~\citet{liu2026random}]
    \label{lem:permutation_inequality}
    Given an arbitrary finite-sum function $\ell(\rvx) = \frac{1}{n}\sum_{i=1}^{n} \ell_{i}(\rvx)$, suppose RR-SGD is employed, then for all $k\in [K]$ and $i\in [n]$, we have
    \[
    \E\sqbk{\ell(\rvx_{k}^{i}) - \ell_{\pi_{k}^{i}}(\rvx_{k}^{i})} = \frac{1}{n} \sum_{j<i}\E\sqbk{\ell_{\pi_{k}^{i}}(\rvx_{k}^{i}(i, j)) - \ell_{\pi_{k}^{i}}(\rvx_{k}^{i})}
    \]
    where $\rvx_{k}^{i}(i, j)$ is defined in~\cref{equ:permutated_update}.
\end{lemma}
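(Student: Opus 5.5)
The statement just before the cut is Lemma~\ref{lem:permutation_inequality}, the permutation identity. The plan is to prove it by an exchangeability argument on the swapped permutations.

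Fix $k$ and condition on $\rvx_k^1$, which depends only on $\pi_1,\dots,\pi_{k-1}$ and is independent of $\pi_k$. All expectations below are over the uniform $\pi_k$, followed by the tower property. Write $\rvx_k^i=\Phi_i(\pi_k)$ as a deterministic function of the permutation. This function depends only on the entries $\pi_k^1,\dots,\pi_k^{i-1}$. By definition~\cref{equ:permutated_update}, $\rvx_k^i(i,j)=\Phi_i(\pi_k\circ(ij))$.

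\textbf{Step 1 (change of variables).} For each fixed $j\le i$, the map $\pi\mapsto\pi\circ(ij)$ is a bijection of $S_n$, so $\pi_k\circ(ij)$ is again uniform. Note that $(\pi_k\circ(ij))^i=\pi_k^j$. Applying the change of variables to the pair (permutation, iterate) therefore gives
\[
\E\sqbk{\ell_{\pi_k^j}(\rvx_k^i)}=\E\sqbk{\ell_{\pi_k^i}(\rvx_k^i(i,j))}.
\]
For $j=i$ this identity is trivial.

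\textbf{Step 2 (average over remaining positions).} For $r\ge i$, the entry $\pi_k^r$ is independent of the iterate $\rvx_k^i$ in the following sense. Conditionally on $(\pi_k^1,\dots,\pi_k^{i-1})$, each later entry is uniform over the unused indices. Hence
\[
\sum_{r\ge i}\E\sqbk{\ell_{\pi_k^r}(\rvx_k^i)} = (n-i+1)\,\E\sqbk{\ell_{\pi_k^i}(\rvx_k^i)}.
\]
Summing over all positions, and using that the entries $\pi_k^1,\dots,\pi_k^n$ together enumerate $[n]$, we also have
\[
n\,\E[\ell(\rvx_k^i)] = \sum_{r=1}^n \E\sqbk{\ell_{\pi_k^r}(\rvx_k^i)}.
\]

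\textbf{Step 3 (assemble).} Split the sum in Step 2 into the positions $r<i$ and $r\ge i$. This gives
\[
n\,\E[\ell(\rvx_k^i)] = \sum_{j<i}\E\sqbk{\ell_{\pi_k^j}(\rvx_k^i)}+(n-i+1)\,\E\sqbk{\ell_{\pi_k^i}(\rvx_k^i)}.
\]
Apply Step 1 to each term with $j<i$. Then subtract $n\,\E[\ell_{\pi_k^i}(\rvx_k^i)]$ from both sides and divide by $n$. The coefficient bookkeeping works out: $(n-i+1)-n=-(i-1)$, and these $i-1$ copies pair with the $i-1$ swapped terms. The result is
\[
\E\sqbk{\ell(\rvx_k^i)-\ell_{\pi_k^i}(\rvx_k^i)}=\frac1n\sum_{j<i}\E\sqbk{\ell_{\pi_k^i}(\rvx_k^i(i,j))-\ell_{\pi_k^i}(\rvx_k^i)},
\]
which is the claim.

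\textbf{Main obstacle.} The only delicate point is Step 1. The change of variables must be applied jointly to the sampled index and to the whole trajectory, and one must check that the swapped trajectory uses exactly $\pi_k\circ(ij)$ at every position up to $i-1$. This holds by construction in~\cref{equ:permutated_update}. Integrability needs no extra assumption, because a finite uniform average over $S_n$ is taken conditionally on $\rvx_k^1$.
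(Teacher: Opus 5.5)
Your proof is correct, and it follows the swap-based argument the paper attributes to \citet{liu2026random}. The paper does not reprove this lemma; your Step 3 decomposition is exactly the identity it later uses as \cref{equ:intermediate_1} in proving \cref{equ:4.11}.

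One small refinement concerns integrability. Conditioning on $\rvx_k^1$ only controls the inner average. The real reason no justification is needed is that $\rvx_1^1$ is deterministic, so the whole sample space $S_n^k$ is finite and the tower step is trivially valid.
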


\begin{lemma}[Non-expansiveness and the best-point self-boundary]
    \label{lem:non_expansiveness}
    For any convex, differentiable, and $L$-smooth function $f:\R^d\to\R$, any $\rvx,\rvy\in\R^d$, and $0\leq\eta\leq2/L$,
    \begin{equation}
        \ltwonorm{(\rvx - \eta \nabla f(\rvx)) - (\rvy - \eta \nabla f(\rvy))}\leq \ltwonorm{\rvx - \rvy}\label{equ:4.1}
    \end{equation}
    Moreover, under Assumptions~\ref{ass:minimizer},~\ref{ass:convexity}, and~\ref{ass:smoothness}, the finite-sum objective $f=\frac1n\sum_{i=1}^n f_i$ satisfies, for every $\rvx\in\R^d$,
    \begin{equation}
        \frac{1}{n}\sum_{i=1}^{n}\ltwonorm{\nabla f_{i}(\rvx)}^2 \leq 4 L \bracket{f(\rvx) - f(\rvx_{*})} + 2\sigma_{*}^2\label{equ:4.2}
    \end{equation}
\end{lemma}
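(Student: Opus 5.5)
The plan is to derive both parts from the co-coercivity inequality of Lemma~\ref{lem:co_coercivity}, applied once in symmetrized form for \eqref{equ:4.1} and once per component, centered at $\rvx_*$, for \eqref{equ:4.2}.

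\textbf{Non-expansiveness \eqref{equ:4.1}.} Write $\mathbf{g}:=\nabla f(\rvx)-\nabla f(\rvy)$ and expand the square:
\[
\ltwonorm{(\rvx-\eta\nabla f(\rvx))-(\rvy-\eta\nabla f(\rvy))}^2=\ltwonorm{\rvx-\rvy}^2-2\eta\langle \mathbf{g},\rvx-\rvy\rangle+\eta^2\ltwonorm{\mathbf{g}}^2 .
\]
Next apply Lemma~\ref{lem:co_coercivity} twice, once with the roles of $\rvx$ and $\rvy$ exchanged, and add the two inequalities. The function values cancel, and this gives the symmetric co-coercivity bound $\langle \mathbf{g},\rvx-\rvy\rangle\geq \frac1L\ltwonorm{\mathbf{g}}^2$. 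Substituting it, the right-hand side is at most
\[
\ltwonorm{\rvx-\rvy}^2-\eta\bigl(\tfrac{2}{L}-\eta\bigr)\ltwonorm{\mathbf{g}}^2 .
\]
This is at most $\ltwonorm{\rvx-\rvy}^2$ because $0\le\eta\le 2/L$; taking square roots finishes this part.

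\textbf{Best-point self-bound \eqref{equ:4.2}.} For each $i$, split around the minimizer using $\ltwonorm{a+b}^2\le 2\ltwonorm{a}^2+2\ltwonorm{b}^2$:
\[
\ltwonorm{\nabla f_i(\rvx)}^2\le 2\ltwonorm{\nabla f_i(\rvx)-\nabla f_i(\rvx_*)}^2+2\ltwonorm{\nabla f_i(\rvx_*)}^2 .
\]
Bound the first term with Lemma~\ref{lem:co_coercivity} applied to $h=f_i$ at $\rvy=\rvx_*$. Here $f_i$ is $L_i$-smooth with $L_i\le L$, so it is also $L$-smooth, and
\[
\ltwonorm{\nabla f_i(\rvx)-\nabla f_i(\rvx_*)}^2\le 2L\bigl(f_i(\rvx)-f_i(\rvx_*)-\langle\nabla f_i(\rvx_*),\rvx-\rvx_*\rangle\bigr).
\]
Now average over $i\in[n]$. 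The linear terms combine into $\langle \nabla f(\rvx_*),\rvx-\rvx_*\rangle$, which vanishes: $f$ is differentiable and $\rvx_*$ is an unconstrained minimizer (Assumption~\ref{ass:minimizer}), so $\nabla f(\rvx_*)=0$. The function-value terms average to $f(\rvx)-f(\rvx_*)$, and the second split term averages to $2\sigma_*^2$. Together this gives
\[
\frac1n\sum_{i=1}^{n}\ltwonorm{\nabla f_i(\rvx)}^2\le 4L\bigl(f(\rvx)-f(\rvx_*)\bigr)+2\sigma_*^2 .
\]

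\textbf{Main step.} Both parts are short once co-coercivity is available. The only points needing care are, for the first part, symmetrizing Lemma~\ref{lem:co_coercivity} so the function values cancel. For the second part, they are using $\nabla f(\rvx_*)=0$ to kill the averaged linear term and replacing each $L_i$ by the common constant $L$.
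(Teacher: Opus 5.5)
Your proposal is correct and follows the paper's proof essentially step for step: symmetrized co-coercivity plus expanding the square for \eqref{equ:4.1}, and the split around $\rvx_*$ with per-component co-coercivity and $\nabla f(\rvx_*)=0$ for \eqref{equ:4.2}. The only cosmetic difference is that for \eqref{equ:4.1} the paper re-derives co-coercivity from the descent inequality applied to the nonnegative function $h_{\rvy}(\rvx)=f(\rvx)-f(\rvy)-\langle\nabla f(\rvy),\rvx-\rvy\rangle$, whereas you cite Lemma~\ref{lem:co_coercivity} directly, which is equally valid.
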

\begin{proof}
    First, for a fixed $\rvy$, define
\[
h_{\rvy}(\rvx) := f(\rvx) - f(\rvy) - \langle \nabla f(\rvy), \rvx - \rvy\rangle 
\]
By convexity, $h_{\rvy}(\rvx)\geq 0$, and $h_{\rvy}$ is $L$-smooth. we have
\[
0\leq h_{\rvy}\bracket{\rvx - \frac{1}{L}\nabla h_{\rvy}(\rvx)} \leq h_{\rvy}(\rvx) - \frac{1}{2L}\ltwonorm{\nabla h_{\rvy}(\rvx)}^2
\]
Thus,
\[
\ltwonorm{\nabla f(\rvx) - \nabla f(\rvy)}^2 \leq 2L \bracket{f(\rvx) - f(\rvy) - \langle \nabla f(\rvy), \rvx - \rvy \rangle}
\]
switch $\rvx$ and $\rvy$, we have
\[
\ltwonorm{\nabla f(\rvy) - \nabla f(\rvx)}^2 \leq 2L \bracket{f(\rvy) - f(\rvx) - \langle \nabla f(\rvx), \rvy - \rvx \rangle}
\]
Summing the two inequalities, we have
\begin{equation*}
    \frac{1}{L}\ltwonorm{\nabla f(\rvx) - \nabla f(\rvy)}^2 \leq \langle \nabla f(\rvx) - \nabla f(\rvy), \rvx - \rvy \rangle
\end{equation*}
Finally,
\begin{align*}
    \ltwonorm{(\rvx - \eta \nabla f(\rvx)) - (\rvy - \eta \nabla f(\rvy))}^2 =& \ltwonorm{\rvx - \rvy}^2 - 2\eta \langle \nabla f(\rvx) - \nabla f(\rvy), \rvx - \rvy\rangle\\
    &\quad + \eta^2\ltwonorm{\nabla f(\rvx) - \nabla f(\rvy)}^2\\
    \leq& \ltwonorm{\rvx - \rvy}^2 -  \eta \bracket{\frac{2}{L} - \eta}\ltwonorm{\nabla f(\rvx) - \nabla f(\rvy)}^2\\
    \leq& \ltwonorm{\rvx - \rvy}^2
\end{align*}
where the last inequality holds when $0\leq \eta \leq 2/L$, then we complete the proof of~\cref{equ:4.1}. 

\paragraph{Proof of~\cref{equ:4.2}.} Let
\[
h_{i}(\rvx) = f_{i}(\rvx) - f_{i}(\rvx_{*}) - \langle\nabla f_{i}(\rvx_{*}), \rvx - \rvx_{*}\rangle
\]
By the squared triangle inequality and co-coercivity of each component, we have
\begin{align*}
    \frac{1}{n}\sum_{i=1}^{n}\ltwonorm{\nabla f_{i}(\rvx)}^2 \leq& \frac{2}{n}\sum_{i=1}^{n}\ltwonorm{\nabla f_{i}(\rvx) - \nabla f_{i}(\rvx_{*})}^2 + \frac{2}{n}\sum_{i=1}^{n}\ltwonorm{\nabla f_{i}(\rvx_{*})}^2\\
    \leq& \frac{4L}{n}\sum_{i=1}^{n}\bracket{f_{i}(\rvx) - f_{i}(\rvx_{*}) - \langle\nabla f_{i}(\rvx_{*}), \rvx - \rvx_{*}\rangle} + 2\sigma_{*}^2\\
    =& 4L\bracket{f(\rvx) - f(\rvx_{*})} + 2\sigma_{*}^2.
\end{align*}
The last equality uses $\frac1n\sum_{i=1}^n\nabla f_i(\rvx_*)=\nabla f(\rvx_*)=\mathbf0$, since $\rvx_*$ minimizes $f$. This proves~\cref{equ:4.2}.
\end{proof}

% !TeX root = ../arxiv_version.tex
\section{Missing Proofs}

\subsection{Proof of Lemma~\ref{lem:permutation_stability}}
The proof of~\cref{equ:4.3} is exactly the same as the proof of Lemma 3 in~\citet{liu2026random}. We provide the proof of~\cref{equ:4.4} here. The original and swapped trajectories use the same component indices before position $j$, so
\[
\rvx_{k}^{j}(i, j) = \rvx_{k}^{j}
\]
The trajectories may differ after the $j$-th update. By~\cref{equ:permutated_update}, their difference is
\[
\rvx_{k}^{j+1}(i, j) - \rvx_{k}^{j + 1} = -\eta \sqbk{\nabla f_{\pi_{k}^{i}}(\rvx_{k}^{j}) - \nabla f_{\pi_{k}^{j}}(\rvx_{k}^{j})}
\]
For $j+1\leq l\leq i-1$, the two trajectories apply the same component-gradient map. Its non-expansiveness gives
\[
\ltwonorm{\rvx_k^{l+1}(i,j)-\rvx_k^{l+1}}
\leq\ltwonorm{\rvx_k^l(i,j)-\rvx_k^l}.
\]
Iterating this inequality up to position $i$, before the second swapped update, yields
\begin{align*}
    \ltwonorm{\rvx_k^i(i,j)-\rvx_k^i}
    \leq& \ltwonorm{\rvx_{k}^{j+1}(i, j) - \rvx_{k}^{j + 1}}\\
    =& \eta \ltwonorm{\nabla f_{\pi_{k}^{i}}(\rvx_{k}^{j}) - \nabla f_{\pi_{k}^{j}}(\rvx_{k}^{j})}
\end{align*}
When $i=j+1$, the same bound follows directly from the difference after the $j$-th update, with no intervening updates.
Suppose at $k$-th epoch and $j$-th inner iteration, the filtration $\cF_{k}^{j}$:
\[
\cF_{k}^{j} := \sigma\bracket{\pi_{1}, \ldots, \pi_{k-1}, \pi_{k}^{1}, \ldots, \pi_{k}^{j-1}}.
\]
For a fixed $\cF_{k}^{j}$, the iteration point $\rvx_{k}^{j}$ is fixed, and $\pi_{k}^{j}, \pi_{k}^{i}\in [n]\backslash \{\pi_{k}^{1}, \ldots, \pi_{k}^{j-1}\}$ admits the uniform distribution. Thus, we have
\[
\E\sqbk{\ltwonorm{\nabla f_{\pi_{k}^{i}}(\rvx_{k}^{j})}^2\mid \cF_{k}^{j}} = \frac{1}{n-j + 1}\sum_{s\in [n]\backslash \{\pi_{k}^{1}, \ldots, \pi_{k}^{j-1}\}} \ltwonorm{\nabla f_{s}(\rvx_{k}^{j})}^2 = \E\sqbk{\ltwonorm{\nabla f_{\pi_{k}^{j}}(\rvx_{k}^{j})}^{2} | \cF_{k}^{j}}
\]
Finally, taking the total expectation, we have
\[
\E\sqbk{\ltwonorm{\nabla f_{\pi_{k}^{i}}(\rvx_{k}^{j})}^2} = \E\sqbk{\ltwonorm{\nabla f_{\pi_{k}^{j}}(\rvx_{k}^{j})}^2}= Q_{k}^{j}
\]
Here we only relies on the uniform distribution of the permutation $\pi_{k}$, without requiring the independence of $\pi_{k}^{i}$ and $\pi_{k}^{j}$. Thus, we have
\begin{align*}
    \E\sqbk{\ltwonorm{\rvx_{k}^{i}(i, j) - \rvx_{k}^{i}}^{2}} \leq& \eta^2 \E\sqbk{\ltwonorm{\nabla f_{\pi_{k}^{i}}(\rvx_{k}^{j}) - \nabla f_{\pi_{k}^{j}}(\rvx_{k}^{j})}^2}\\
    \leq& 2\eta^2 \E\sqbk{\ltwonorm{\nabla f_{\pi_{k}^{i}}(\rvx_{k}^{j})}^2 + \ltwonorm{\nabla f_{\pi_{k}^{j}}(\rvx_{k}^{j})}^2}\\
    =& 4\eta^2 Q_{k}^{j}
\end{align*}
which completes the proof of~\cref{equ:4.4}.

\subsection{Proof of Lemma~\ref{lem:self_boundness}}
\paragraph{Proof of~\cref{equ:4.10}.} If $\eta=0$, all iterates equal the deterministic initialization $\rvx_1^1$. Since each $\pi_k^i$ is marginally uniform on $[n]$, we have $\beta_k^i=0$, and~\cref{equ:4.10} holds immediately. For the remainder of this argument, assume $\eta>0$. Denote $\delta := \rvx_{k}^{i}(i, j) - \rvx_{k}^{i}$. By smoothness and Young's inequality, we have
\begin{align*}
    f_{\pi_{k}^{i}}(\rvx_{k}^{i} + \delta) - f_{\pi_{k}^{i}}(\rvx_{k}^{i})\leq& \langle \nabla f_{\pi_{k}^{i}}(\rvx_{k}^{i}), \delta \rangle + \frac{L}{2}\ltwonorm{\delta}^2\\
    \leq& \frac{\eta}{2}\ltwonorm{\nabla f_{\pi_{k}^{i}}(\rvx_{k}^{i})}^2 + \frac{1}{2\eta}\ltwonorm{\delta}^2 + \frac{L}{2}\ltwonorm{\delta}^2 \\
    =& \frac{\eta}{2} \ltwonorm{\nabla f_{\pi_{k}^{i}}(\rvx_{k}^{i})}^2 + \bracket{\frac{1}{2\eta} + \frac{L}{2}}\ltwonorm{\delta}^2
\end{align*}
Taking expection on the both sides, combine~\cref{equ:4.4} in Lemma~\ref{lem:permutation_stability}, we have
\[
\E\sqbk{f_{\pi_{k}^{i}}(\rvx_{k}^{i}(i, j)) - f_{\pi_{k}^{i}}(\rvx_{k}^{i})} \leq \frac{\eta}{2}Q_{k}^{i} + \bracket{\frac{1}{2\eta} + \frac{L}{2}}\E\sqbk{\ltwonorm{\delta}^2}\leq\frac{\eta}{2}Q_{k}^{i} + (2\eta + 2\eta^2 L)\E\sqbk{\ltwonorm{\nabla f_{\pi_k^{j}}(\rvx_k^j)}^2}
\]
Apply Lemma~\ref{lem:permutation_inequality} with $l_i = f_{i}$
\begin{align*}
    \E\sqbk{f(\rvx_{k}^{i}) - f_{\pi_{k}^{i}}(\rvx_{k}^{i})} =& \frac{1}{n}\sum_{j<i}\E\sqbk{f_{\pi_{k}^{i}}(\rvx_{k}^{i}(i, j)) - f_{\pi_{k}^{i}}(\rvx_{k}^{i})}\\
    \leq& \frac{1}{n}\sum_{j<i}\sqbk{\frac{\eta}{2}Q_{k}^{i} + (2\eta + 2\eta^2L)\E\sqbk{\ltwonorm{\nabla f_{\pi_k^{j}}(\rvx_k^j)}^2}}\\
    \leq& \frac{\eta}{2}Q_{k}^{i} + \frac{3\eta}{n} \sum_{j<i}\E\sqbk{\ltwonorm{\nabla f_{\pi_k^{j}}(\rvx_k^j)}^2}
\end{align*}
when $\eta L \leq 1/2$, then the coefficient of $\E\sqbk{\ltwonorm{\nabla f_{\pi_k^{j}}(\rvx_k^j)}^2}$ satisfies $2\eta + 2\eta^2 L = 2\eta(1 + \eta L)\leq 3\eta$ when $i\leq n$. Then we complete the proof of~\cref{equ:4.10}

\paragraph{Proof of~\cref{equ:4.11}.} Apply Lemma~\ref{lem:permutation_inequality}, with 
\[
\ell_{j}(\rvx) = \ltwonorm{\nabla f_{j}(\rvx)}^2,\qquad \ell(\rvx):=\frac{1}{n}\sum_{j=1}^{n}\ell_{j}(\rvx)
\]
then
\begin{align*}
    \E\sqbk{\frac{1}{n}\sum_{r=1}^{n}\ltwonorm{\nabla f_r(\rvx_k^i)}^2} - \E\sqbk{\ltwonorm{\nabla f_{\pi_k^{i}}(\rvx_k^i)}^2} =& \frac{1}{n}\sum_{j<i}\bracket{\E\sqbk{\ltwonorm{\nabla f_{\pi_{k}^{i}}(\rvx_{k}^{i}(i, j))}^2 - \ltwonorm{\nabla f_{\pi_{k}^{i}}(\rvx_{k}^{i})}^2}}\\
    =&\frac{1}{n}\sum_{j<i} \E\sqbk{\ltwonorm{\nabla f_{\pi_{k}^{i}}(\rvx_{k}^{i}(i, j))}^2} - \frac{i-1}{n}\E\sqbk{\ltwonorm{\nabla f_{\pi_k^{i}}(\rvx_k^i)}^2}
\end{align*}
\[
\begin{aligned}
M_{k}^{i} - Q_{k}^{i} &= \frac{1}{n}\sum_{j<i}\bracket{\E\sqbk{\ltwonorm{\nabla f_{\pi_{k}^{i}}(\rvx_{k}^{i}(i, j))}^2 - Q_{k}^{i}}}\\
&=\frac{1}{n}\sum_{j<i} \E\sqbk{\ltwonorm{\nabla f_{\pi_{k}^{i}}(\rvx_{k}^{i}(i, j))}^2} - \frac{i-1}{n}\E\sqbk{\ltwonorm{\nabla f_{\pi_k^{i}}(\rvx_k^i)}^2}
\end{aligned}
\]
thus
\begin{equation}
    \label{equ:intermediate_1}
    \sum_{r=1}^{n}\E\sqbk{\ltwonorm{\nabla f_r(\rvx_k^i)}^2} = \sum_{j<i}\E\sqbk{\ltwonorm{\nabla f_{\pi_{k}^{i}}(\rvx_{k}^{i}(i, j))}^2} + (n-(i-1))Q_{k}^{i}
\end{equation}
At the same time,
\begin{align*}
    \ltwonorm{\nabla f_{\pi_{k}^{i}}(\rvx_{k}^{i})}^2 \leq& 2\ltwonorm{\nabla f_{\pi_{k}^{i}}(\rvx_{k}^{i}(i, j))}^2 + 2\ltwonorm{\nabla f_{\pi_{k}^{i}}(\rvx_{k}^{i}) - \nabla f_{\pi_{k}^{i}}(\rvx_{k}^{i}(i, j))}^2\\
    \leq& 2\ltwonorm{\nabla f_{\pi_{k}^{i}}(\rvx_{k}^{i}(i, j))}^2 + 2L^2\ltwonorm{\rvx_{k}^{i} - \rvx_{k}^{i}(i, j)}^2
\end{align*}
Taking the expectation on both sides, we have
\[
\E\sqbk{\ltwonorm{\nabla f_{\pi_k^{i}}(\rvx_k^i)}^2}\leq 2\E\sqbk{\ltwonorm{\nabla f_{\pi_{k}^{i}}(\rvx_{k}^{i}(i, j))}^2} + 2L^2 \E\sqbk{\ltwonorm{\rvx_{k}^{i}(i, j) - \rvx_{k}^{i}}^2}
\]
Taking summation on the both sides for $j<i$, we have
\[
(i-1)\E\sqbk{\ltwonorm{\nabla f_{\pi_k^{i}}(\rvx_k^i)}^2} \leq 2\sum_{j<i}\E\sqbk{\ltwonorm{\nabla f_{\pi_{k}^{i}}(\rvx_{k}^{i}(i, j))}^2} + 2L^2 \sum_{j<i}\E\sqbk{\ltwonorm{\rvx_{k}^{i}(i, j) - \rvx_{k}^{i}}^2}
\]
Combining~\cref{equ:intermediate_1}, we further have
\begin{align*}
    n\E\sqbk{\ltwonorm{\nabla f_{\pi_k^{i}}(\rvx_k^i)}^2} \leq& 2\sum_{j<i}\E\sqbk{\ltwonorm{\nabla f_{\pi_{k}^{i}}(\rvx_{k}^{i}(i, j))}^2} + 2L^2 \sum_{j<i}\E\sqbk{\ltwonorm{\rvx_{k}^{i}(i, j) - \rvx_{k}^{i}}^2}\\
    &\qquad + (n-(i-1))\E\sqbk{\ltwonorm{\nabla f_{\pi_k^{i}}(\rvx_k^i)}^2}\\
    =& 2 \bracket{\sum_{r=1}^{n}\E\sqbk{\ltwonorm{\nabla f_r(\rvx_k^i)}^2} - (n-(i-1))\E\sqbk{\ltwonorm{\nabla f_{\pi_k^{i}}(\rvx_k^i)}^2}} \\
    &\qquad+ 2L^2 \sum_{j<i}\E\sqbk{\ltwonorm{\rvx_{k}^{i}(i, j) - \rvx_{k}^{i}}^2} + (n-(i-1))\E\sqbk{\ltwonorm{\nabla f_{\pi_k^{i}}(\rvx_k^i)}^2}\\
    =& 2 \sum_{r=1}^{n}\E\sqbk{\ltwonorm{\nabla f_r(\rvx_k^i)}^2} - (n-(i-1))\E\sqbk{\ltwonorm{\nabla f_{\pi_k^{i}}(\rvx_k^i)}^2}\\
    &\qquad + 2L^2 \sum_{j<i}\E\sqbk{\ltwonorm{\rvx_{k}^{i}(i, j) - \rvx_{k}^{i}}^2}
\end{align*}
Since $i-1\leq n-1$ and $Q_{k}^{i}\geq 0$, then
\begin{align*}
    \E\sqbk{\ltwonorm{\nabla f_{\pi_k^{i}}(\rvx_k^i)}^2} \leq& \frac{2}{n}\sum_{r=1}^{n}\E\sqbk{\ltwonorm{\nabla f_r(\rvx_k^i)}^2} + \frac{2L^2}{n} \sum_{j<i}\E\sqbk{\ltwonorm{\rvx_{k}^{i}(i, j) - \rvx_{k}^{i}}^2}\\
    \leq& \frac{2}{n} \sum_{r=1}^{n}\E\sqbk{\ltwonorm{\nabla f_r(\rvx_k^i)}^2} + \frac{8\eta^2 L^2}{n}\sum_{j<i} \E\sqbk{\ltwonorm{\nabla f_{\pi_k^{j}}(\rvx_k^j)}^2}\\
    \leq& 8L\E\sqbk{f(\rvx_{k}^{i}) - f(\rvx_{*})} + 4\sigma_{*}^{2} + \frac{8\eta^2 L^2}{n}\sum_{j<i} \E\sqbk{\ltwonorm{\nabla f_{\pi_k^{j}}(\rvx_k^j)}^2}
\end{align*}
Where the last inequality holds by Lemma~\ref{lem:non_expansiveness}, we complete the proof of~\cref{equ:4.11}.

\subsection{Proof of Lemma~\ref{lem:exact_comparator_bias_identify}}

\paragraph{Proof of~\cref{equ:4.16}.} Fix $(p,q)\in\cI_N$ and define the history before this position and the set of remaining component indices by
\[
\cF_p^q:=\sigma(\pi_1,\ldots,\pi_{p-1},\pi_p^1,\ldots,\pi_p^{q-1}),
\qquad
\mathcal R_p^q:=[n]\setminus\set{\pi_p^1,\ldots,\pi_p^{q-1}}.
\]
The initialization is deterministic, and $\rvx_p^q$ depends only on the indices revealed before $(p,q)$, so $\rvx_p^q$ is $\cF_p^q$-measurable. Also, $|\mathcal R_p^q|=n-q+1$.

First consider $(k,i)\in\cI_N(p,q)$ with $k=p$, so that $i\geq q$. Conditional on $\cF_p^q$, the unrevealed suffix of $\pi_p$ is a uniform permutation of $\mathcal R_p^q$. In particular, both $\pi_p^i$ and $\pi_p^q$ are conditionally uniform on this set. Hence
\[
\E\sqbk{f_{\pi_p^i}(\rvx_p^q)\mid\cF_p^q}=\frac{1}{n-q+1}\sum_{r\in\mathcal R_p^q}f_r(\rvx_p^q)=\E\sqbk{f_{\pi_p^q}(\rvx_p^q)\mid\cF_p^q}.
\]
This equality uses only the identical conditional marginal distributions of the two indices. Taking total expectations and using the definition of $\beta_p^q$ gives
\[
\E\sqbk{f_{\pi_p^i}(\rvx_p^q)-f(\rvx_p^q)}
=\E\sqbk{f_{\pi_p^q}(\rvx_p^q)-f(\rvx_p^q)}
=-\beta_p^q.
\]

Next consider $k>p$. The fresh permutation $\pi_k$ is independent of $\cF_p^q$, and each of its positions is uniform on $[n]$. Therefore
\[
\E\sqbk{f_{\pi_k^i}(\rvx_p^q)\mid\cF_p^q}
=\frac1n\sum_{r=1}^n f_r(\rvx_p^q)
=f(\rvx_p^q),
\]
so $\E[f_{\pi_k^i}(\rvx_p^q)-f(\rvx_p^q)]=0$. Combining the two cases proves~\cref{equ:4.16}: the comparison-point bias equals $-\beta_p^q$ at every remaining position of epoch $p$ and vanishes in every later epoch.

\paragraph{Proof of~\cref{equ:4.17}.} We first establish an inequality that holds for each realization of the permutations. For any comparison point $\rvx\in\R^d$, convexity gives
\[
f_{\pi_k^i}(\rvx_k^i)-f_{\pi_k^i}(\rvx)
\leq\langle\nabla f_{\pi_k^i}(\rvx_k^i),\rvx_k^i-\rvx\rangle.
\]
Expanding the squared distance after the RR-SGD update yields
\begin{align*}
\ltwonorm{\rvx_k^{i+1}-\rvx}^2
={}&\ltwonorm{\rvx_k^i-\rvx}^2
-2\eta\langle\nabla f_{\pi_k^i}(\rvx_k^i),\rvx_k^i-\rvx\rangle
+\eta^2\ltwonorm{\nabla f_{\pi_k^i}(\rvx_k^i)}^2.
\end{align*}
Combining these two relations and dividing by $2\eta$ gives
\[
f_{\pi_k^i}(\rvx_k^i)-f_{\pi_k^i}(\rvx)
\leq\frac{\ltwonorm{\rvx_k^i-\rvx}^2-\ltwonorm{\rvx_k^{i+1}-\rvx}^2}{2\eta}
+\frac{\eta}{2}\ltwonorm{\nabla f_{\pi_k^i}(\rvx_k^i)}^2.
\]
Recall that $N=\tau(k_N,i_N)$. Summing over the consecutive positions in $\cI_N(p,q)$ telescopes the distance terms, including across epoch boundaries because $\rvx_k^{n+1}=\rvx_{k+1}^1$. Thus
\begin{equation}
\begin{aligned}
&\sum_{(k,i)\in\cI_N(p,q)}
\sqbk{f_{\pi_k^i}(\rvx_k^i)-f_{\pi_k^i}(\rvx)}\\
&\quad\leq\frac{\ltwonorm{\rvx_p^q-\rvx}^2-\ltwonorm{\rvx_{k_N}^{i_N+1}-\rvx}^2}{2\eta}
+\frac{\eta}{2}\sum_{(k,i)\in\cI_N(p,q)}\ltwonorm{\nabla f_{\pi_k^i}(\rvx_k^i)}^2\\
&\quad\leq\frac{\ltwonorm{\rvx_p^q-\rvx}^2}{2\eta}
+\frac{\eta}{2}\sum_{(k,i)\in\cI_N(p,q)}\ltwonorm{\nabla f_{\pi_k^i}(\rvx_k^i)}^2.
\end{aligned}
\label{equ:4.19}
\end{equation}
The last step discards a nonpositive terminal-distance term. If needed, the successor $\rvx_{k_N}^{i_N+1}$ is defined by one additional update for the analysis.

Since~\cref{equ:4.19} holds for every realized trajectory and every comparison point, we may substitute the random point $\rvx=\rvx_p^q$ before taking expectations. No independence between this point and the subsequent sampled gradients is needed. With this choice, the initial-distance term is zero. The definitions of $\Delta_k^i$ and $\beta_k^i$, together with~\cref{equ:4.16}, give
\begin{align*}
\E\sqbk{f_{\pi_k^i}(\rvx_k^i)}
&=f(\rvx_*)+\Delta_k^i-\beta_k^i,\\
\E\sqbk{f_{\pi_k^i}(\rvx_p^q)}
&=f(\rvx_*)+\Delta_p^q-\beta_p^q\mathbf{1}_{\set{k=p}}.
\end{align*}
Consequently, each expected summand on the left-hand side of~\cref{equ:4.19} is
\[
\E\sqbk{f_{\pi_k^i}(\rvx_k^i)-f_{\pi_k^i}(\rvx_p^q)}
=\Delta_k^i-\Delta_p^q-\beta_k^i
+\beta_p^q\mathbf{1}_{\set{k=p}}.
\]
The positions belonging to epoch $p$ run from global index $\tau(p,q)$ through $e_{N,p}^q=\min\set{np,N}$, inclusive. Their number is therefore
\[
\sum_{(k,i)\in\cI_N(p,q)}\mathbf{1}_{\set{k=p}}
=e_{N,p}^q-\tau(p,q)+1.
\]
Taking expectations in~\cref{equ:4.19} now yields
\[
\sum_{(k,i)\in\cI_N(p,q)}(\Delta_k^i-\Delta_p^q)
-\sum_{(k,i)\in\cI_N(p,q)}\beta_k^i
+\bigl(e_{N,p}^q-\tau(p,q)+1\bigr)\beta_p^q\leq\frac{\eta}{2}\sum_{(k,i)\in\cI_N(p,q)}
\E\sqbk{\ltwonorm{\nabla f_{\pi_k^i}(\rvx_k^i)}^2}.
\]
Rearranging proves~\cref{equ:4.17}, with the sign of the comparison-point bias retained.

\paragraph{Proof of~\cref{equ:4.18}.} In~\cref{equ:4.19}, take $(p,q)=(1,1)$ and $\rvx=\rvx_*$. Since $\rvx_*$ is fixed and $\pi_k^i$ is marginally uniform on $[n]$,
\[
\E\sqbk{f_{\pi_k^i}(\rvx_*)}
=\frac1n\sum_{r=1}^n f_r(\rvx_*)
=f(\rvx_*).
\]
It follows that
\[
\E\sqbk{f_{\pi_k^i}(\rvx_k^i)-f_{\pi_k^i}(\rvx_*)}
=\Delta_k^i-\beta_k^i.
\]
Taking expectations in~\cref{equ:4.19} and using $\ltwonorm{\rvx_1^1-\rvx_*}^2=D^2$, we obtain
\[
\sum_{(k,i)\in\cI_N}(\Delta_k^i-\beta_k^i)
\leq\frac{D^2}{2\eta}
+\frac{\eta}{2}\sum_{(k,i)\in\cI_N}
\E\sqbk{\ltwonorm{\nabla f_{\pi_k^i}(\rvx_k^i)}^2}.
\]
Moving the sampling-bias sum to the right-hand side proves~\cref{equ:4.18}.

\subsection{Proof of Lemma~\ref{lem:4.5}}
For simplicity, we use the notation where
\[
Q_{k}^{i} = \E\sqbk{\ltwonorm{\nabla f_{\pi_{k}^{i}}(\rvx_{k}^{i})}^2}
\]
For the real-valued sequence $\{\Delta_k^i:(k,i)\in\cI_N\}$, define the averaged suffix
\[
A_p^q:=
\frac{1}{N-\tau(p,q)+1}
\sum_{(k,i)\in\cI_N(p,q)}\Delta_k^i.
\]
we have
\[
A_{1}^{1} = \frac{1}{N}\sum_{(k, i)\in \cI_{N}}\Delta_k^i,\qquad A_{k_N}^{i_N}=\Delta_{k_N}^{i_N}=\Delta_{N}
\]
Since $N=\tau(k_N,i_N)$, we have $\cI_N(k_N,i_N)=\{(k_N,i_N)\}$. For $\tau(p,q)<N$, let $(p',q')\in\cI_N$ be the next position, so that $\tau(p',q')=\tau(p,q)+1$. Then
\begin{align*}
A_{p'}^{q'}-A_p^q
&=
\frac{1}{N-\tau(p',q')+1}
\sum_{(k,i)\in\cI_N(p',q')}\Delta_k^i
-
\frac{1}{N-\tau(p,q)+1}
\sum_{(k,i)\in\cI_N(p,q)}\Delta_k^i\\
&=
\frac{\sum_{(k,i)\in\cI_N(p,q)}\Delta_k^i-\Delta_p^q}
{N-\tau(p,q)}
-
\frac{\sum_{(k,i)\in\cI_N(p,q)}\Delta_k^i}
{N-\tau(p,q)+1}\\
&=
\frac{
\sum_{(k,i)\in\cI_N(p,q)}\Delta_k^i
-
\bigl(N-\tau(p,q)+1\bigr)\Delta_p^q
}{
\bigl(N-\tau(p,q)\bigr)\bigl(N-\tau(p,q)+1\bigr)
}\\
&=
\frac{
\sum_{(k,i)\in\cI_N(p,q)}
\bigl(\Delta_k^i-\Delta_p^q\bigr)
}{
\bigl(N-\tau(p,q)\bigr)\bigl(N-\tau(p,q)+1\bigr)
}.
\end{align*}
Summing over $A_{p'}^{q'} - A_{p}^{q}$ for all $\tau(p, q)< N$, since $(p', q')$ is the next position after $(p, q)$, it gives
\[
\sum_{(p, q)\in \cI_{N}, \tau(p, q)< N} (A_{p'}^{q'} - A_{p}^{q}) = A_{k_N}^{i_N} - A_{1}^{1} = \Delta_{N} -  \frac{1}{N} \sum_{(k, i)\in \cI_{N}} \Delta_{k}^{i}
\]
this further implies
\begin{align*}
    \Delta_{N} =& \frac{1}{N} \sum_{(k, i)\in \cI_{N}} \Delta_{k}^{i} + \sum_{(p, q)\in \cI_{N}, \tau(p, q)< N} (A_{p'}^{q'} - A_{p}^{q})\\
    =& \frac{1}{N}\sum_{(k,i)\in\cI_N}\Delta_k^i+
\sum_{\substack{(p,q)\in\cI_N\\\tau(p,q)<N}}
\frac{\sum_{(k,i)\in\cI_N(p,q)}(\Delta_k^i-\Delta_p^q)}
{(N-\tau(p,q))(N-\tau(p,q)+1)}.
\end{align*}
Define the weight coefficient
\[
c_{N,p}^{q}:=
\frac{1}{(N-\tau(p,q))(N-\tau(p,q)+1)},
\qquad \tau(p,q)<N.
\]
Apply~\cref{equ:4.17,equ:4.18}, then
\begin{align*}
\Delta_N\leq{}&
\frac{D^2}{2\eta N}
+\underbrace{\frac{\eta}{2N}\sum_{(k,i)\in\cI_N}Q_k^i
+\frac{\eta}{2}\sum_{\substack{(p,q)\in\cI_N\\\tau(p,q)<N}}
c_{N,p}^{q}\sum_{(k,i)\in\cI_N(p,q)}Q_k^i}_{\text{Term contains $Q_{k}^{i}$}}\\
&+\underbrace{\frac{1}{N}\sum_{(k,i)\in\cI_N}\beta_k^i+\sum_{\substack{(p,q)\in\cI_N\\\tau(p,q)<N}}
c_{N,p}^{q}\sum_{(k,i)\in\cI_N(p,q)}\beta_k^i-\sum_{\substack{(k,i)\in\cI_N\\\tau(k,i)<N}}
c_{N,k}^{i}\bigl(e_{N,k}^i-\tau(k,i)+1\bigr)\beta_k^i}_{\text{Term contains $\beta$}}.
\end{align*}
We collect the coefficients of $Q_k^i$ and $\beta_k^i$ respectively.

\paragraph{The coefficient of $Q_{k}^{i}$.} Exchange the order of summation, since for $(p, q)$ whenever $\tau(p, q)<N$ and $(k, i)\in \cI_{N}(p, q)$, it is equivalent to $\tau(p, q)\leq \min\set{\tau(k, i), N-1}$. Therefore
\begin{equation}
    \label{equ:Q_coefficient_reformulation}
\sum_{\substack{(p,q)\in\cI_N\\\tau(p,q)<N}}
c_{N,p}^{q}\sum_{(k,i)\in\cI_N(p,q)}Q_k^i
=
\sum_{(k,i)\in\cI_N}
\bracket{\sum_{\substack{(p,q)\in\cI_N\\
\tau(p,q)\leq\min\set{\tau(k,i),N-1}}}c_{N,p}^{q}}Q_k^i.    
\end{equation}
Thus, the gradient terms satisfy
\begin{align}
&\frac{\eta}{2N}\sum_{(k,i)\in\cI_N}Q_k^i
+\frac{\eta}{2}
\sum_{\substack{(p,q)\in\cI_N\\\tau(p,q)<N}}
c_{N,p}^{q}\sum_{(k,i)\in\cI_N(p,q)}Q_k^i\nonumber\\
&\qquad=
\frac{\eta}{2}\sum_{(k,i)\in\cI_N}
\bracket{\frac{1}{N}
+\sum_{\substack{(p,q)\in\cI_N\\
\tau(p,q)\leq\min\set{\tau(k,i),N-1}}}c_{N,p}^{q}}Q_k^i\nonumber\\
&\qquad=
\frac{\eta}{2}\sum_{(k,i)\in\cI_N}
\bracket{\frac{1}{N}
+\frac{1}{N-\min\set{\tau(k,i),N-1}}-\frac{1}{N}}Q_k^i\nonumber\\
&\qquad=
\frac{\eta}{2}\sum_{(k,i)\in\cI_N}w_{N,k}^{i}Q_k^i.\nonumber
\end{align}
with the definition of $w_{N,k}^{i} = \frac{1}{\max\{N-\tau(k, i), 1\}}$.

\paragraph{The coefficient of $\beta$.} Recall the term that contains $\beta$ is. 
\[
\underbrace{\frac{1}{N}\sum_{(k, i)\in \cI_{N}}\beta_{k}^{i} +\sum_{\substack{(p,q)\in\cI_N\\\tau(p,q)<N}}
c_{N,p}^{q}\sum_{(k,i)\in\cI_N(p,q)}\beta_k^i}_{\text{Term}_{1}}-\underbrace{\sum_{\substack{(k,i)\in\cI_N\\\tau(k,i)<N}}
c_{N,k}^{i}\bigl(e_{N,k}^i-\tau(k,i)+1\bigr)\beta_k^i}_{\text{Term}_{2}}.
\]
For the $\text{Term}_{1}$, similar to~\cref{equ:Q_coefficient_reformulation}, the coefficient of $\beta_k^i$ is
\begin{align}
\frac{1}{N}\sum_{(k,i)\in\cI_N}\beta_k^i
+\sum_{\substack{(p,q)\in\cI_N\\\tau(p,q)<N}}
c_{N,p}^{q}\sum_{(k,i)\in\cI_N(p,q)}\beta_k^i=&
\sum_{(k,i)\in\cI_N}
\bracket{\frac{1}{N}
+\sum_{\substack{(p,q)\in\cI_N\\
\tau(p,q)\leq\min\set{\tau(k,i),N-1}}}c_{N,p}^{q}}\beta_k^i\nonumber\\
=&
\sum_{(k,i)\in\cI_N}
\frac{\beta_k^i}{N-\min\set{\tau(k,i),N-1}}.\nonumber
\end{align}
For a fixed $(k,i)$, when $\tau(k,i)<N$, subtracting the $\text{Term}_{2}$, the coefficient of $\beta_k^i$ is
\begin{align*}
\frac{1}{N-\tau(k, i)} - c_{N, k}^{i}(e_{N, k}^{i} - \tau(k, i) + 1) =& \frac{1}{N - \tau(k, i)} - \frac{e_{N,k}^i-\tau(k,i)+1}
{(N-\tau(k,i))(N-\tau(k,i)+1)}\\
=& \frac{N-e_{N,k}^i}
{(N-\tau(k,i))(N-\tau(k,i)+1)}\\
=&v_{N,k}^{i}
\end{align*}
When $\tau(k,i)=N$, only coefficient in $\text{Term}_{1}$ remains, since the coefficient in $\text{Term}_{2}$ is zero. Thus, the coefficient of $\beta_k^i$ is $1=v_{N,k}^{i}$. Therefore
\[
\Delta_N
\leq
\frac{D^2}{2\eta N}
+\frac{\eta}{2}\sum_{(k,i)\in\cI_N}w_{N,k}^{i}Q_k^i
+\sum_{(k,i)\in\cI_N}v_{N,k}^{i}\beta_k^i,
\]
which proves~\cref{equ:4.20}.

At the same time, we also show two inequalities~\cref{equ:4.21,equ:4.22} for $v_{N,k}^{i}$ and $w_{N,k}^{i}$, which will be used in the proof of Proposition~\cref{prop:weighted_gradient_endpoint_inequality}. 
\begin{lemma}
    \label{lem:weight_relation}
    For every $(k, i), (p, q)\in \cI_{N}$, the weights $v_{N,k}^{i}$ and $w_{N,k}^{i}$ defined in Lemma~\ref{lem:4.5} satisfy
    \begin{align}
    0\leq v_{N,k}^{i}\leq& w_{N,k}^{i}\label{equ:4.21}\\
    \frac{1}{n}\sum_{\substack{(k,i)\in\cI_N\\\tau(k,i)<N,\ (p,q)\in\cP_k^i}}v_{N,k}^{i}\leq& w_{N,p}^{q}\label{equ:4.22}
\end{align}
\end{lemma}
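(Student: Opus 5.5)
Both inequalities follow by direct computation from the explicit formulas for $v_{N,k}^i$ and $w_{N,k}^i$ in Lemma~\ref{lem:4.5}. Throughout, write $s:=N-\tau(k,i)$ for the distance of position $(k,i)$ from the horizon.

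\emph{Proof of \cref{equ:4.21}.} The terminal position $s=0$ gives $v=w=1$. For $s\geq1$ we have $\tau(k,i)\leq e_{N,k}^i\leq N$, since $e_{N,k}^i=\min\{nk,N\}$ and $\tau(k,i)\leq nk$. Hence $0\leq N-e_{N,k}^i\leq N-\tau(k,i)=s$, and therefore
\[
0\leq v_{N,k}^i=\frac{N-e_{N,k}^i}{s(s+1)}\leq \frac{1}{s+1}\leq\frac1s=w_{N,k}^i .
\]

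\emph{Proof of \cref{equ:4.22}.} Fix $(p,q)$. The condition $(p,q)\in\cP_k^i$ means $k=p$ and $q<i\leq n$, so the sum runs only over later positions of the same epoch $p$ with $\tau(k,i)<N$.

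\textbf{Case 1: epoch $p$ is truncated by the horizon ($np\geq N$).} Then $e_{N,p}^i=N$ for every such $(p,i)$, so $v_{N,p}^i=0$ at all nonterminal positions. The sum is zero and \cref{equ:4.22} holds trivially.

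\textbf{Case 2: epoch $p$ is complete ($np<N$).} Put $a:=N-np\geq1$, which is the numerator of $v_{N,p}^i$ for every $i$ in epoch $p$, and $s_i:=N-\tau(p,i)$. As $i$ ranges over $q+1,\dots,n$, the values $s_i$ range over $a,a+1,\dots,a+n-q-1$. The sum therefore telescopes:
\[
\frac1n\sum_{i=q+1}^{n}\frac{a}{s_i(s_i+1)}=\frac{a}{n}\Bigl(\frac1a-\frac{1}{a+n-q}\Bigr)=\frac{n-q}{n\,(a+n-q)}.
\]
Since $a+n-q=N-\tau(p,q)$, this equals $\frac{n-q}{n}\,w_{N,p}^q\leq w_{N,p}^q$. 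The case $\tau(p,q)=N$ cannot occur here, because epoch $p$ is complete and so $\tau(p,q)\leq np<N$.

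The only delicate points are bookkeeping. First, one must check that in Case 2 the numerator $N-e_{N,p}^i$ is the same constant $a$ across the whole epoch; this is what makes the sum telescope. Second, one must confirm the endpoint of the telescoping: the largest index $i=n$ gives $s_n=a$, and the smallest index $i=q+1$ gives $s_{q+1}+1=N-\tau(p,q)$. With these in place, the gain is visible: the extra factor $1/n$ in \cref{equ:4.22} is absorbed without loss, and the bound even improves by the factor $(n-q)/n\leq 1$.
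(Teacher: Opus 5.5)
Your proof is correct and follows the paper's argument essentially step for step. For \cref{equ:4.21} you use the same bound $N-e_{N,k}^i\le N-\tau(k,i)$. For \cref{equ:4.22} you make the same split into a horizon-truncated epoch ($np\ge N$, where every nonterminal $v_{N,p}^i$ vanishes) and a complete epoch ($np<N$, where the common numerator $N-np$ makes the sum telescope to $(n-q)/\bigl(n(N-\tau(p,q))\bigr)$); your case labels are swapped relative to the paper's, and you keep the exact factor $(n-q)/n$ where the paper just bounds $e_{N,p}^q-\tau(p,q)\le n$, but these differences are cosmetic.
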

\begin{proof}
    In proving~\cref{equ:4.21}, the definition of $e_{N,k}^i=\min\{nk, N\}$ gives $\tau(k,i)\leq e_{N,k}^i\leq N$. When $\tau(k,i)<N$,
\begin{align*}
0\leq v_{N,k}^{i}
&=\frac{N-e_{N,k}^i}
{(N-\tau(k,i))(N-\tau(k,i)+1)}\\
&\leq\frac{1}{N-\tau(k,i)+1}
\leq\frac{1}{N-\tau(k,i)}
=w_{N,k}^{i}.
\end{align*}
When $\tau(k,i)=N$, $v_{N,k}^{i}=w_{N,k}^{i}=1$. This proves~\cref{equ:4.21}.

For~\cref{equ:4.22}, fix $(p,q)\in\cI_N$. Recall the definition of $\cP_k^i=\{(k, j): 1\leq j < i\}$, the condition $(p,q)\in\cP_k^i$ is equivalent to $k=p$ and $q<i$. Hence
\[
\set{(k,i)\in\cI_N:(p,q)\in\cP_k^i}
=
\set{(p, j): q < j\leq n, \tau(p, j)\leq N}
=
\set{(p,j):\tau(p,q)<\tau(p,j)\leq e_{N,p}^q}.
\]
where the last equality follows from $e_{N, p}^{q} = \min\{np, N\}$. For these positions, $e_{N,k}^i=e_{N,p}^q$. 

If $\tau(p,q)=N$, then there is not pair $(k, i)$ satisfying $\tau(p, q)< \tau(k, i)\leq N$, thus
\[
\frac{1}{n}
\sum_{\substack{(k,i)\in\cI_N\\
\tau(k,i)<N,\ (p,q)\in\cP_k^i}}v_{N,k}^{i}
=0\leq w_{N,p}^{q}.
\]
When $\tau(p,q)<N$.

\paragraph{Case 1: $e_{N,p}^q<N$.}
All relevant positions satisfy $\tau(k,i)<N$, and $e_{N,p}^q=np$. Thus
\begin{align}
\sum_{\substack{(k,i)\in\cI_N\\
\tau(k,i)<N,\ (p,q)\in\cP_k^i}}v_{N,k}^{i}=&
\sum_{j=q+1}^{n}
\frac{N-e_{N,p}^q}
{(N-\tau(p,j))(N-\tau(p,j)+1)}\nonumber\\
=&
(N-e_{N,p}^q)\sum_{j=q+1}^{n}
\bracket{\frac{1}{N-\tau(p,j)}
-\frac{1}{N-\tau(p,j)+1}}\nonumber\\
=&
(N-e_{N,p}^q)
\bracket{\frac{1}{N-e_{N,p}^q}
-\frac{1}{N-\tau(p,q)}}\nonumber\\
=&
\frac{e_{N,p}^q-\tau(p,q)}{N-\tau(p,q)}.\nonumber
\end{align}
When $e_{N,p}^q=\tau(p,q)$, both sides are zero. Since the positions lie in the same epoch of length $n$,
\[
0\leq e_{N,p}^q-\tau(p,q)\leq n-1\leq n.
\]
Consequently,
\[
\frac{1}{n}
\sum_{\substack{(k,i)\in\cI_N\\
\tau(k,i)<N,\ (p,q)\in\cP_k^i}}v_{N,k}^{i}
=
\frac{e_{N,p}^q-\tau(p,q)}{n(N-\tau(p,q))}
\leq\frac{1}{N-\tau(p,q)}
=w_{N,p}^{q}.
\]

\paragraph{Case 2: $e_{N,p}^q=N$.}
For every relevant position with $\tau(p,q)<\tau(k,i)<N$, we have $e_{N,k}^i=N$, so
\[
v_{N,k}^{i}
=
\frac{N-e_{N,k}^i}
{(N-\tau(k,i))(N-\tau(k,i)+1)}
=0.
\]
The terminal position is excluded by the condition $\tau(k,i)<N$ in~\cref{equ:4.22}. Therefore
\[
\frac{1}{n}
\sum_{\substack{(k,i)\in\cI_N\\
\tau(k,i)<N,\ (p,q)\in\cP_k^i}}v_{N,k}^{i}
=0\leq w_{N,p}^{q}.
\]
Combining the two cases prove~\cref{equ:4.22}.
\end{proof}

\subsection{Proof of Proposition~\ref{prop:weighted_gradient_endpoint_inequality}}
\label{app:subsec:proof_of_prop:weighted_gradient_endpoint_inequality}
By~\cref{equ:4.10,equ:4.11}, for every $(k,i)\in\cI_N$,
\begin{align}
\beta_k^i
&\leq \frac{\eta}{2}Q_k^i
+\frac{3\eta}{n}\sum_{(p,q)\in\cP_k^i}Q_p^q,
\label{equ:4.14}\\
Q_k^i
&\leq 8L\Delta_k^i+4\sigma_{*}^2
+\frac{8\eta^2L^2}{n}\sum_{(p,q)\in\cP_k^i}Q_p^q.
\label{equ:4.15}
\end{align}
For the change in summation order below, we also need the block sum including the terminal position. For a fixed $(p,q)\in\cI_N$. If $e_{N,p}^q<N$, all relevant positions precede $N$, so~\cref{equ:4.22} applies. If $\tau(p,q)=N$, the sum is empty. If $e_{N,p}^q=N$ and $\tau(p,q)<N$, only the terminal position contributes, and
\[
\frac{1}{n}
\sum_{\substack{(k,i)\in\cI_N\\(p,q)\in\cP_k^i}}v_{N,k}^{i}
=
\frac{1}{n}
\leq\frac{1}{N-\tau(p,q)}
=w_{N,p}^{q},
\]
because $1\leq N-\tau(p,q)\leq n-1$. Hence, in every case,
\[
\frac{1}{n}
\sum_{\substack{(k,i)\in\cI_N\\(p,q)\in\cP_k^i}}v_{N,k}^{i}
\leq w_{N,p}^{q}.
\]
Since $v_{N,k}^{i}\geq0$, multiplying~\cref{equ:4.14} by $v_{N,k}^{i}$ and summing gives
\begin{align*}
\sum_{(k,i)\in\cI_N}v_{N,k}^{i}\beta_k^i
\leq{}&
\frac{\eta}{2}\sum_{(k,i)\in\cI_N}v_{N,k}^{i}Q_k^i
+\frac{3\eta}{n}\sum_{(k,i)\in\cI_N}v_{N,k}^{i}
\sum_{(p,q)\in\cP_k^i}Q_p^q\\
={}&
\frac{\eta}{2}\sum_{(k,i)\in\cI_N}v_{N,k}^{i}Q_k^i
+3\eta\sum_{(p,q)\in\cI_N}Q_p^q
\bracket{\frac{1}{n}
\sum_{\substack{(k,i)\in\cI_N\\(p,q)\in\cP_k^i}}v_{N,k}^{i}}\\
\leq{}&
\frac{\eta}{2}\sum_{(k,i)\in\cI_N}w_{N,k}^{i}Q_k^i
+3\eta\sum_{(p,q)\in\cI_N}w_{N,p}^{q}Q_p^q.
\end{align*}
The last inequality uses~\cref{equ:4.21} and the block-sum bound above. Combining this with~\cref{equ:4.20} proves~\cref{equ:4.25}.

\subsection{Proof of Lemma~\ref{lem:harmonic_weight_block_sum}}
Fix $(p,q)\in\cI_N$. If $\tau(p,q)=N$, the sum in~\cref{equ:4.26} is empty. Otherwise, let
\[
a:=N-\tau(p,q)\geq1,\qquad
m:=e_{N,p}^q-\tau(p,q)\leq\min\set{n-1,a}.
\]
We first bound the sum including the terminal position. Since $(p,q)\in\cP_k^i$ means $k=p$ and $q<i$, the definition of $w_{N,k}^{i}$ gives
\[
\frac{1}{n}
\sum_{\substack{(k,i)\in\cI_N\\(p,q)\in\cP_k^i}}w_{N,k}^{i}
=
\frac{1}{n}
\bracket{\sum_{j=1}^{\min\set{m,a-1}}\frac{1}{a-j}
+\mathbf{1}_{\set{m=a}}}.
\]
If $a\leq n$, then $a/n\leq1$, and
\[
\frac{a}{n}
\bracket{\sum_{j=1}^{\min\set{m,a-1}}\frac{1}{a-j}
+\mathbf{1}_{\set{m=a}}}
\leq\cH_{a-1}+1
\leq2\cH_N.
\]
If $a>n$, then $m\leq n-1<a$, so the terminal term is absent. For $1\leq j\leq n-1$, we have $a/(a-j)\leq n/(n-j)$. Consequently,
\[
\frac{a}{n}\sum_{j=1}^{m}\frac{1}{a-j}
\leq\sum_{j=1}^{n-1}\frac{1}{n-j}
=\cH_{n-1}
\leq\cH_N.
\]
Since $w_{N,p}^{q}=1/a$, these two cases yield
\[
\frac{1}{n}
\sum_{\substack{(k,i)\in\cI_N\\(p,q)\in\cP_k^i}}w_{N,k}^{i}
\leq2\cH_Nw_{N,p}^{q}.
\]
This proves~\cref{equ:4.26}, including the contribution at the terminal position.

\subsection{Proof of Lemma~\ref{lem:all_time_last_iterate_bound}}
We argue by induction over the global positions. At $(k,i)=(1,1)$, smoothness and $\nabla f(\rvx_{*})=0$ give
\[
\Delta_1^1 = 
\E\sqbk{f(\rvx_1^1)-f(\rvx_{*})}
\leq\frac{L}{2}D^2
\leq\frac{D^2}{\eta}.
\]
For a fixed $N=\tau(k_N,i_N)\geq2$, suppose the conclusion holds at all earlier positions:
\[
\Delta_k^i
\leq\frac{D^2}{\eta\tau(k,i)}
+128\eta\sigma_{*}^2\cH_{\tau(k,i)},
\qquad (k,i)\in\cI_N,\quad \tau(k,i)<N.
\]
Since $w_{N,k_N}^{i_N}=1$ and $\Delta_N=\Delta_{k_N}^{i_N}$,
\[
V_N
=
\Delta_N
+\sum_{\substack{(k,i)\in\cI_N\\\tau(k,i)<N}}
\frac{\Delta_k^i}{N-\tau(k,i)}.
\]
At the same time, we have the harmonic type bound:
\begin{align*}
\sum_{\substack{(k,i)\in\cI_N\\\tau(k,i)<N}}
\frac{1}{\tau(k,i)(N-\tau(k,i))}
&=\frac{2\cH_{N-1}}{N},\\
\sum_{t=1}^{N-1}
\frac{\cH_{t}}{N-t}
&\leq \cH_{N}\sum_{t=1}^{N-1}\frac{1}{N-t}\leq\cH_N\cH_{N-1},
\end{align*}
respectively. By the induction hypothesis,
\begin{align}
V_N\leq{}&
\Delta_N
+\frac{D^2}{\eta}
\sum_{\substack{(k,i)\in\cI_N\\\tau(k,i)<N}}
\frac{1}{\tau(k,i)(N-\tau(k,i))}+128\eta\sigma_{*}^2
\sum_{\substack{(k,i)\in\cI_N\\\tau(k,i)<N}}
\frac{\cH_{\tau(k,i)}}{N-\tau(k,i)}\nonumber\\
\leq{}&
\Delta_N+\frac{2D^2\cH_{N-1}}{\eta N}
+128\eta\sigma_{*}^2
\sum_{\substack{(k,i)\in\cI_N\\\tau(k,i)<N}}
\frac{\cH_N}{N-\tau(k,i)}\nonumber\\
\leq{}&
\Delta_N+\frac{2D^2\cH_{N-1}}{\eta N}
+128\eta\sigma_{*}^2\cH_N\cH_{N-1}.
\label{equ:weighted_iteration}
\end{align}
Substituting~\cref{equ:weighted_iteration} into~\cref{equ:4.30} gives
\begin{align*}
\Delta_N\leq{}&
\frac{D^2}{2\eta N}+64\eta L V_N
+32\eta\sigma_{*}^2(\cH_{N-1} + 1)\\
\leq{}&
\frac{D^2}{2\eta N}+64\eta L\Delta_N
+128\eta L\frac{D^2\cH_{N-1}}{\eta N}+8192\eta L\eta\sigma_{*}^2\cH_N\cH_{N-1}
+32\eta\sigma_{*}^2(\cH_{N-1} + 1).
\end{align*}
Since $\eta L \cH_{N}\leq 1/512$, we obtain
\[
\frac{1}{2}+128\eta L\cH_{N-1}\leq\frac{3}{4},
\qquad
8192\eta L\cH_N\cH_{N-1}
+32(\cH_{N-1}+1)\leq80\cH_N.
\]
Moving $64\eta L \Delta_{N}$ to the left hand side
\begin{align}
(1-64\eta L)\Delta_N\leq{}&
\frac{D^2}{\eta N}
\bracket{\frac{1}{2}+128\eta L\cH_{N-1}}+\bracket{8192\eta L\cH_N\cH_{N-1}
+32(\cH_{N-1}+1)}\eta\sigma_{*}^2\nonumber\\
\leq{}&
\frac{3D^2}{4\eta N}
+80\eta\sigma_{*}^2\cH_N.
\label{equ:4.32}
\end{align}
Also, $1-64\eta L\geq7/8$. Therefore
\[
\Delta_N
\leq\frac{6}{7}\frac{D^2}{\eta N}
+\frac{640}{7}\eta\sigma_{*}^2\cH_N
\leq\frac{D^2}{\eta N}
+128\eta\sigma_{*}^2\cH_N,
\]
which completes the induction.

\subsection{Proof of Lemma~\ref{lem:within_epoch_comparison}}
\label{subsec:app:within_epoch_comparison}
When $n=1$, we have $\sigma_*^2=0$, and RR-SGD degenerates to gradient descent. Since $\eta L\leq1/2$, its objective does not increase in a single update, which proves the claim for $i=1,2$. We therefore assume $n\geq2$.

Fix an epoch $k$ and condition on its starting point $\rvy=\rvx_k^1$. The fresh permutation $\pi_k$ is independent of $\rvy$; write $\E_k[\cdot]=\E[\cdot\mid\rvx_k^1]$ and $G_{\rvy}=f(\rvy)-f(\rvx_*)$. Define
\[
a_j:=\nabla f_j(\rvx_*),\qquad
b_j(\rvy):=\nabla f_j(\rvy)-\nabla f_j(\rvx_*).
\]
Since $\nabla f(\rvx_*)=0$, we have $\sum_{j=1}^n a_j=0$ and $n^{-1}\sum_{j=1}^n\|a_j\|^2=\sigma_*^2$. Co-coercivity, averaged over the components, also gives
\[
\frac1n\sum_{j=1}^n\|b_j(\rvy)\|^2
\leq \frac{2L}{n}\sum_{j=1}^n
\bracket{f_j(\rvy)-f_j(\rvx_*)-\langle a_j,\rvy-\rvx_*\rangle}=2LG_{\rvy}.
\]
For $0\leq s\leq n$, the variance formula for sampling without replacement yields
\[
\E_k\left\|\sum_{r=1}^s a_{\pi_k^r}\right\|^2
=\frac{s(n-s)}{n-1}\sigma_*^2
\leq n\sigma_*^2.
\]
Indeed, for distinct positions $r,u$, the expected inner product is
$\E_k\langle a_{\pi_k^r},a_{\pi_k^u}\rangle=-\sigma_*^2/(n-1)$.
By Cauchy--Schwarz and the uniform marginal distribution at each position,
\[
\E_k\left\|\sum_{r=1}^s b_{\pi_k^r}(\rvy)\right\|^2
\leq s\sum_{r=1}^s\E_k\|b_{\pi_k^r}(\rvy)\|^2
\leq2s^2LG_{\rvy}.
\]
Consequently, the sum of gradients evaluated at the epoch's starting point satisfies
\[
\E_k\left\|\sum_{r=1}^s\nabla f_{\pi_k^r}(\rvy)\right\|^2
\leq4n^2LG_{\rvy}+2n\sigma_*^2=:A_{\rvy}^2.
\]

Let $d_s=(\E_k\|\rvx_k^{s+1}-\rvy\|^2)^{1/2}$, so that $d_0=0$. The update rule gives
\[
\rvx_k^{s+1}-\rvy
=-\eta\sum_{r=1}^s\nabla f_{\pi_k^r}(\rvy)-\eta\sum_{r=1}^s
\bracket{\nabla f_{\pi_k^r}(\rvx_k^r)-\nabla f_{\pi_k^r}(\rvy)}.
\]
Taking conditional $L^2$ norms and applying smoothness yields
\[
d_s\leq\eta A_{\rvy}+\eta L\sum_{r=0}^{s-1}d_r.
\]
An induction therefore gives, for $1\leq s\leq n$,
\[
d_s\leq\eta A_{\rvy}(1+\eta L)^{s-1}
\leq\eta A_{\rvy}e^{\eta nL}.
\]
Since $\eta nL\leq1/2$, this implies
\[
\E_k\|\rvx_k^{s+1}-\rvy\|^2
\leq e\eta^2\bracket{4n^2LG_{\rvy}+2n\sigma_*^2}.
\]
Finally, smoothness of $f$ and $\|\nabla f(\rvy)\|^2\leq2LG_{\rvy}$ imply, for any $\rvx$,
\[
f(\rvx)-f(\rvx_*)
\leq2G_{\rvy}+L\|\rvx-\rvy\|^2.
\]
Combining the preceding bounds, for $i=s+1$ we obtain
\[
\E_k[f(\rvx_k^i)-f(\rvx_*)]\leq\bracket{2+4e(\eta nL)^2}G_{\rvy}
+2e\eta^2nL\sigma_*^2\leq6G_{\rvy}+8\eta^2nL\sigma_*^2.
\]
The case $i=1$ is immediate. Taking total expectations proves the claim for every $1\leq i\leq n+1$.

\subsection{Proof of Theorem~\ref{thm:main}}
Fix a pair $(k,i)$ chosen before the run, with $1\leq i\leq n$ and $2\leq N=\tau(k,i)\leq nK+1$, and let $\Lambda=1+\log(nK)$. We retain explicit constants throughout the proof. The assumed stepsize satisfies
\[
\eta L\cH_N
\leq\eta L\cH_{nK+1}
\leq2\eta L\Lambda
\leq\frac1{512}.
\]
Thus Lemma~\ref{lem:all_time_last_iterate_bound} gives, at every such fixed position,
\begin{equation}
\Delta_k^i
\leq\frac{D^2}{\eta N}+256\Lambda\eta\sigma_*^2.
\label{equ:app_all_positions_large_step}
\end{equation}
We combine this bound with the small-step endpoint result using
\[
\eta_0:=\frac{1}{4nL\sqrt{2(1+\log K)}}.
\]

\paragraph{Small stepsizes: $\eta\leq\eta_0$.}
Here
\[
\eta nL\leq1/(4\sqrt2)<1/2,
\]
so Lemma~\ref{lem:within_epoch_comparison} applies. We first handle positions in the first two epochs. For $k=1$, smoothness at the minimizer gives $\Delta_1^1\leq LD^2/2$, and hence
\begin{align*}
\Delta_1^i
&\leq6\Delta_1^1+8\eta^2nL\sigma_*^2\\
&\leq3LD^2+8\eta^2nL\sigma_*^2\\
&\leq\frac{3D^2}{2\eta N}+8\eta^2nL\sigma_*^2.
\end{align*}
The last inequality uses $N=i\leq n$ and $\eta LN\leq\eta Ln\leq1/2$.
For $k=2$, use $\rvx_2^1=\rvx_1^{n+1}$ and apply the same lemma twice:
\begin{align*}
\Delta_2^i
&\leq6\Delta_2^1+8\eta^2nL\sigma_*^2\\
&\leq36\Delta_1^1+56\eta^2nL\sigma_*^2\\
&\leq18LD^2+56\eta^2nL\sigma_*^2\\
&\leq\frac{18D^2}{\eta N}+56\eta^2nL\sigma_*^2.
\end{align*}
Here $N=n+i\leq2n$, so $\eta LN\leq2\eta nL\leq1$.

For $k\geq3$, set $m=k-1$. Then $2\leq m\leq K$, and
\[
\eta\leq\eta_0
\leq\frac1{4nL\sqrt{2(1+\log m)}}.
\]
The random-permutation case in the proof of Theorem~C.1 of~\citet{liu2024last} gives an explicit version of Lemma~\ref{lem:theorem_c_1}. With no regularizer and $L=\max_r L_r$, its bound after $m$ completed epochs is
\[
\Delta_k^1
\leq\frac{D^2}{\eta nm}
+\frac{16}{3}\eta^2nL\sigma_*^2(1+\log m).
\]
Applying Lemma~\ref{lem:within_epoch_comparison} therefore gives
\begin{align*}
\Delta_k^i
&\leq6\Delta_k^1+8\eta^2nL\sigma_*^2\\
&\leq\frac{6D^2}{\eta nm}
+\bigl[32(1+\log m)+8\bigr]\eta^2nL\sigma_*^2\\
&\leq\frac{12D^2}{\eta N}
+40\Lambda\eta^2nL\sigma_*^2.
\end{align*}
The last step uses $N=nm+i\leq2nm$, $1+\log m\leq\Lambda$, and $\Lambda\geq1$. Combining the three cases $k=1$, $k=2$, and $k\geq3$, we obtain
\begin{equation}
\Delta_k^i
\leq\frac{18D^2}{\eta N}
+56\Lambda\eta^2nL\sigma_*^2
\leq\frac{18D^2}{\eta N}
+56\Lambda^{3/2}\eta^2nL\sigma_*^2.
\label{equ:app_small_step_explicit}
\end{equation}
Since $\eta nL<1$, the noise term in the minimum of Theorem~\ref{thm:main} is $\eta^2nL\sigma_*^2$ in this regime.

\paragraph{Intermediate stepsizes: $\eta>\eta_0$ and $\eta nL<1$.}
In this case,
\[
\frac1{\eta nL}<4\sqrt{2(1+\log K)}\leq4\sqrt{2\Lambda}.
\]
Consequently,
\[
256\Lambda\eta\sigma_*^2
=\frac{256\Lambda}{\eta nL}\eta^2nL\sigma_*^2
\leq1024\sqrt2\,\Lambda^{3/2}\eta^2nL\sigma_*^2.
\]
Substitution into~\cref{equ:app_all_positions_large_step} yields
\begin{equation}
\Delta_k^i
\leq\frac{D^2}{\eta N}
+1024\sqrt2\,\Lambda^{3/2}\eta^2nL\sigma_*^2
\leq\frac{D^2}{\eta N}
+2048\Lambda^{3/2}\eta^2nL\sigma_*^2.
\label{equ:app_intermediate_step_explicit}
\end{equation}
Again, the minimum equals $\eta^2nL\sigma_*^2$ because $\eta nL<1$.

\paragraph{Larger stepsizes: $\eta nL\geq1$.}
Now the minimum equals $\eta\sigma_*^2$. By~\cref{equ:app_all_positions_large_step} and $\Lambda\leq\Lambda^{3/2}$,
\begin{equation}
\Delta_k^i
\leq\frac{D^2}{\eta N}+256\Lambda\eta\sigma_*^2
\leq\frac{D^2}{\eta N}+256\Lambda^{3/2}\eta\sigma_*^2.
\label{equ:app_larger_step_explicit}
\end{equation}
Combining~\cref{equ:app_small_step_explicit,equ:app_intermediate_step_explicit,equ:app_larger_step_explicit}, and using $18\geq1$ and $2048\geq\max\{56,256\}$, proves the explicit bound
\[
\Delta_k^i
\leq\frac{18D^2}{\eta\tau(k,i)}
+2048\Lambda^{3/2}\min\set{\eta\sigma_*^2,\eta^2nL\sigma_*^2}
\]
for every pair fixed before the run in the stated range, including
$(k,i)=(K+1,1)$. This establishes Theorem~\ref{thm:main}.

\subsection{Proof of Corollary~\ref{cor:tuned}}
\label{subsec:app:proof_cor_tuned}
Let $T=nK$, $\Lambda=1+\log T$, and $M=1/(1024L\Lambda)$. Applying Theorem~\ref{thm:main} at $(K+1,1)$ and using $\tau(K+1,1)=T+1\geq T$ gives
\[
\E[f(\rvx_{K+1}^1)-f(\rvx_*)]
\lesssim \frac{D^2}{\eta T}
+\Lambda^{3/2}\sigma_*^2\min\{\eta,nL\eta^2\}.
\]
If $\sigma_*=0$, choosing $\eta=M$ gives the claim immediately. For $\sigma_*>0$, the two candidates in Corollary~\ref{cor:tuned} are obtained by balancing the initial-distance term in Theorem~\ref{thm:main} against each of its two noise expressions:
\[
\frac{D^2}{T\eta_{\mathrm{lin}}}
=\Lambda^{3/2}\sigma_*^2\eta_{\mathrm{lin}},
\qquad
\frac{D^2}{T\eta_{\mathrm{quad}}}
=\Lambda^{3/2}\sigma_*^2 nL\eta_{\mathrm{quad}}^2.
\]
Taking $u=\max\{\eta_{\mathrm{lin}},\eta_{\mathrm{quad}}\}$ selects the smaller of the two balanced initial-distance terms. Since $u$ equals one of these candidates, the corresponding equality above implies
\[
\Lambda^{3/2}\sigma_*^2\min\{u,nLu^2\}
\leq \frac{D^2}{Tu}.
\]
The prescribed choice $\eta=\min\{M,u\}$ enforces the stepsize restriction in Theorem~\ref{thm:main}. The noise term is nondecreasing in the stepsize, and $1/\eta\leq1/M+1/u$. Hence
\begin{align*}
\E[f(\rvx_{K+1}^1)-f(\rvx_*)]
&\lesssim \frac{D^2}{MT}+\frac{2D^2}{Tu}\\
&\lesssim \frac{\Lambda LD^2}{nK}
+\min\left\{\frac{\Lambda^{3/4}\sigma_*D}{\sqrt{nK}},
\Lambda^{1/2}\left(\frac{L\sigma_*^2D^4}{nK^2}\right)^{1/3}\right\}.
\end{align*}
Hiding the logarithmic factors yields Corollary~\ref{cor:tuned}.

\end{document}